\newtheorem{theorem}{Theorem}
\newtheorem{lemma}{Lemma}
\newtheorem{remark}{Remark}
\newtheorem{definition}{Definition}
\newtheorem{corollary}{Corollary}
\newtheorem{conjecture}{Conjecture}
\newtheorem{proposition}{Proposition}
\newtheorem{question}{Question}
\newcommand{\Tr}{\operatorname{Tr}}
\newcommand{\R}{\mathbb{R}}
\begin{document}

\title{Stability of the Poincar\'e--Korn inequality
}

\author{%
  Thomas~A.~Courtade\footnote{University of California, Berkeley, United States, Department of Electrical Engineering and Computer Sciences.\\
    courtade@berkeley.edu}
  \and 
  Max~Fathi\footnote{Université Paris Cité and Sorbonne Université, CNRS, Laboratoire Jacques-Louis Lions and Laboratoire de Probabilit\'es, Statistique et Mod\'elisation, F-75013 Paris, France;  and
  DMA, École normale supérieure, Université PSL, CNRS, 75005 Paris, France; and
 Institut Universitaire de France.\\ mfathi@lpsm.paris }}

\date{May 2nd 2024}
\maketitle
 
\abstract{We resolve a question of Carrapatoso et al.~\cite{CDHMM} on Gaussian optimality for the sharp constant in Poincar\'e-Korn inequalities, under a moment constraint. We also prove stability, showing that measures with near-optimal constant are quantitatively close  to   standard Gaussian.}

\section{Introduction and Main Result}

Let $\mu$ be a centered Borel probability measure on $\mathbb{R}^n$,  $n\geq 2$.  Let $\mathcal{A}$ denote the set of  antisymmetric linear maps from $\mathbb{R}^n$ to itself.  That is, 
$$
\mathcal{A} := \{ x \mapsto A x~; A \in   M_{n\times n}(\mathbb{R})  , A = -A^T\}. 
$$
Further define the linear space of vector-valued functions 
$$
\mathcal{C}:= \{ u : \mathbb{R}^n \to  \mathbb{R}^n~;~\mbox{$u$ differentiable, $\int u d\mu = 0$, and $\|\nabla_{s}u\|_{L^2(\mu)}^2 < \infty$}\}, 
$$
where $\nabla_{s} u := \frac{1}{2}((\nabla u) - (\nabla u)^T)$ is the  symmetrized gradient of the vector-valued function $u : \mathbb{R}^n \to \mathbb{R}^n$.   Since $\mu$ is centered\footnote{The persistent centering assumption   comes without any   loss of generality, and is only made for convenience.},  $\mathcal{A}$ is a closed linear subspace of $\mathcal{C}$.  

\begin{definition}
A centered Borel probability measure $\mu$ on $\mathbb{R}^n$ satisfies a {\bf Poincar\'{e}--Korn inequality} with constant $C$ if 
\begin{align}
\inf_{a\in \mathcal{A}}\|u - a \|^2_{L^2(\mu)} \leq 2 C \|\nabla_{s}u\|_{L^2(\mu)}^2, ~~ \mbox{for all   $u \in \mathcal{C}$.} \label{ineq:PK}
\end{align}
The {\bf Poincar\'{e}--Korn constant} associated to $\mu$, denoted $C_{PK}(\mu)$, is the smallest constant $C$ such that \eqref{ineq:PK} holds.
\end{definition}

This type of inequality was introduced in \cite{CDHMM}. It is inspired by Poincar\'e inequalities, which control variances of scalar valued functions by the $L^2$ norm of their gradient, and the Korn inequality from continuous mechanics, which controls the $L^2$ norm of the gradient of a vector field satisfying some boundary condition by its symmetric part. Both inequalities have found many applications in analysis. They were both originally introduced for uniform measures on domains, but can be extended to general probability densities. In this form, Poincar\'e inequalities are
\begin{equation}
\int{f^2d\mu} - \left(\int{fd\mu}\right)^2 \leq C_P(\mu)\int{|\nabla f|^2d\mu}, \hspace{3mm} \forall f : \R^d \longrightarrow \R^d, 
\end{equation}
where $C_P(\mu)$ is the Poincar\'e constant of $\mu$, and the right-hand side is to be understood as $+\infty$ if it is not well-defined for the function $f$.   Korn inequalities are of the form
\begin{equation}
\inf_{a \in \mathcal{A}} \|\nabla(u - a)\| ^2_{L^2(\mu)} \leq C_K  \|\nabla_{s}u\|_{L^2(\mu)}^2, ~~ \mbox{for all   $u \in \mathcal{C}$.}
\end{equation}

Poincar\'e inequalities are now a very classical tool in probability and functional analysis, and applications include concentration of measure inequalities and rates of convergence to equilibrium for Markov processes. We refer to the monograph \cite{BGL14} for background and many developments. On the other hand, classical Korn inequalities are a tool in kinetic theory and fluid mechanics, going back to \cite{Kor06}. We shall make no attempt to survey the vast literature, and refer to \cite{Hor95} for some background. Best constants were investigated for example in \cite{LM16}. Weighted Korn inequalities were recently introduced in \cite{CDHMM}, motivated by hypocoercivity problems in kinetic theory. 

Our definition of the Poincar\'e--Korn constant differs from that in \cite{CDHMM} by a factor of 2.  This is done to give unit normalization with respect to the standard Gaussian measure $\gamma$, defined by
$$
d\gamma(x) := \frac{1}{(2\pi)^{n/2}}e^{-|x|^2/2} dx, ~~~~x\in \R^n.
$$
Probability measures with sufficiently regular potentials admit a finite Poincar\'e--Korn constant if they satisfy a  Poincar\'e inequality with finite constant.
\begin{proposition}\cite[Theorem 1]{CDHMM} 
We have  $C_{PK}(\gamma)=1$ in any dimension $n \geq 2$.    Moreover, if   a centered probability measure  with density $d\mu = e^{-\phi}dx$ of class $C^2$   satisfies
$$\forall \epsilon > 0 , \exists C_{\epsilon}>0 : ~~\|\nabla^2 \phi(x)\|^2 \leq \epsilon |\nabla \phi(x)|^2 + C_{\epsilon},\hspace{3mm} \forall x\in \R^n,$$
then $C_{P}(\mu) < \infty \Rightarrow C_{PK}(\mu)<\infty$. 
\end{proposition}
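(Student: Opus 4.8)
The plan is to treat the two assertions separately; both rest on an integration-by-parts identity for $d\mu=e^{-\phi}\,dx$. Expanding $2|\nabla_s u|^2=|\nabla u|^2+\sum_{i,j}\partial_j u_i\,\partial_i u_j$ pointwise and integrating the last sum by parts twice against $e^{-\phi}$ (moving $\partial_j$ first, then collecting the resulting $\operatorname{div}$- and $\nabla\phi$-terms) gives, for smooth compactly supported $u:\R^n\to\R^n$,
\[
2\|\nabla_s u\|^2_{L^2(\mu)}=\|\nabla u\|^2_{L^2(\mu)}+\|\operatorname{div}u-\nabla\phi\cdot u\|^2_{L^2(\mu)}-\int\langle(\nabla^2\phi)\,u,u\rangle\,d\mu,
\]
which extends to the relevant regularity class by density; for $\mu=\gamma$ (so $\nabla\phi=x$ and $\nabla^2\phi=\id$) it becomes $2\|\nabla_s u\|^2=\|\nabla u\|^2+\|\operatorname{div}u-x\cdot u\|^2-\|u\|^2$. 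To get $C_{PK}(\gamma)\le1$, I would write $u\in\mathcal{C}$ as $u=Mx+v$ with $M=(\E_\gamma[u_i x_j])_{i,j}$, so that $v$ is $L^2(\gamma)$-orthogonal to all affine functions; splitting $M=S+A$ into symmetric and antisymmetric parts, $Ax\in\mathcal{A}$, $Sx\perp v$, and $\E_\gamma[\nabla_s v]=0$ (hence $S\perp\nabla_s v$ in $L^2(\gamma)$), so $\inf_{a\in\mathcal{A}}\|u-a\|^2\le\|Sx+v\|^2=\|S\|^2_{HS}+\|v\|^2$ while $\|\nabla_s u\|^2=\|S\|^2_{HS}+\|\nabla_s v\|^2$; it thus suffices to show $\|v\|^2\le2\|\nabla_s v\|^2$ for $v$ orthogonal to affine functions. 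For such $v$ the Gaussian identity gives $2\|\nabla_s v\|^2\ge\|\nabla v\|^2-\|v\|^2$, and since the Hermite expansion of each component of $v$ carries no term of degree $\le1$ (degree $0$ removed by $\int v\,d\gamma=0$, degree $1$ by orthogonality to linear functions), the spectral gap of the Ornstein--Uhlenbeck operator on that subspace equals $2$, i.e.\ $\|\nabla v\|^2\ge2\|v\|^2$; combining yields $2\|\nabla_s v\|^2\ge\|v\|^2$. For the reverse bound $C_{PK}(\gamma)\ge1$ I would exhibit the extremizer $v(x)=(x_1x_2,\,1-x_1^2,\,0,\dots,0)$: it lies in $\mathcal{C}$, is $L^2(\gamma)$-orthogonal to $\mathcal{A}$ (so $\inf_{a\in\mathcal{A}}\|v-a\|^2=\|v\|^2=3$), and a direct computation gives $\|\nabla_s v\|^2=3/2$, so the inequality with constant $1$ is saturated. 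Hence $C_{PK}(\gamma)=1$.

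For the implication $C_P(\mu)<\infty\Rightarrow C_{PK}(\mu)<\infty$, fix $\epsilon>0$ small (depending only on $n$). The growth hypothesis first yields an auxiliary weighted estimate: starting from $\int|\nabla\phi|^2 f^2\,d\mu=\int(\Delta\phi)f^2\,d\mu+2\int f\,\nabla\phi\cdot\nabla f\,d\mu$, using $|\Delta\phi|\le\sqrt{n}\,\|\nabla^2\phi\|\le\sqrt{n}(\sqrt\epsilon|\nabla\phi|+\sqrt{C_\epsilon})$, Young's inequality, and absorbing the $\int|\nabla\phi|^2f^2\,d\mu$ term, one obtains $\int|\nabla\phi|^2 f^2\,d\mu\le A_\epsilon\|f\|^2_{L^2(\mu)}+c\|\nabla f\|^2_{L^2(\mu)}$ for scalar $f$, with $c$ absolute. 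Applied componentwise, together with $|\langle(\nabla^2\phi)u,u\rangle|\le(\sqrt\epsilon|\nabla\phi|+\sqrt{C_\epsilon})|u|^2$ and a further Young step, this gives $\bigl|\int\langle(\nabla^2\phi)u,u\rangle\,d\mu\bigr|\le\tfrac12\|\nabla u\|^2_{L^2(\mu)}+C'_\mu\|u\|^2_{L^2(\mu)}$. Plugging this into the Korn identity and discarding the nonnegative divergence term produces a weighted ``Korn's second inequality''
\[
\|\nabla u\|^2_{L^2(\mu)}\le4\,\|\nabla_s u\|^2_{L^2(\mu)}+2C'_\mu\,\|u\|^2_{L^2(\mu)},\qquad u\in\mathcal{C}.
\]

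To upgrade this to the Poincaré--Korn inequality I would argue by contradiction and compactness. If $C_{PK}(\mu)=\infty$, pick $w_k\in\mathcal{C}$ with $\|w_k\|_{L^2(\mu)}=1$, $w_k$ orthogonal in $L^2(\mu)$ to $\mathcal{A}$, $\int w_k\,d\mu=0$ (legitimate since $\mu$ is centered), and $\|\nabla_s w_k\|_{L^2(\mu)}\to0$. The weighted Korn inequality bounds $\|\nabla w_k\|_{L^2(\mu)}$ and the auxiliary estimate bounds $\int|\nabla\phi|^2|w_k|^2\,d\mu$, so along a subsequence $w_k\rightharpoonup w$ weakly in the weighted Sobolev space $H^1(\mu)$. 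If $w_k\to w$ strongly in $L^2(\mu)$, then $\|w\|_{L^2(\mu)}=1$ and $\nabla_s w=0$, which forces $w$ to be an affine antisymmetric map, i.e.\ $w\in\mathcal{A}$ (using $\int w\,d\mu=0$ and that $\mu$ is centered), contradicting $w\perp\mathcal{A}$.

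I expect the strong $L^2(\mu)$-convergence to be the main obstacle: the embedding $H^1(\mu)\hookrightarrow L^2(\mu)$ need not be compact (measures with merely exponential-type tails satisfy the hypotheses), so the family $(|w_k|^2\,d\mu)$ need not be tight. On balls $\mu$ is comparable to Lebesgue measure, so Rellich--Kondrachov gives strong convergence on compact sets, and when $|\nabla\phi|\to\infty$ the bound on $\int|\nabla\phi|^2|w_k|^2\,d\mu$ provides tightness and one is done; in general one must exploit $\nabla_s w_k\to0$ itself, not merely boundedness of $\nabla w_k$. The mechanism is that, by the classical Korn inequality on balls, any mass of $w_k$ escaping to infinity is asymptotically a rigid motion $x\mapsto A_kx+b_k$, while $\int w_k\,d\mu=0$ forces $b_k\to0$ and then $w_k\perp\mathcal{A}$ forces $A_k\to0$, so no mass can escape. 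Turning this into a bona fide concentration-compactness argument for $(|w_k|^2\,d\mu)$, with uniform control of the cutoff errors in the weighted Korn identity, is the technical heart of the conditional statement.
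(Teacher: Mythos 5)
The paper gives no proof of this proposition --- it is cited directly from \cite[Theorem 1]{CDHMM} --- so your proposal must be judged on its own merits. Your proof of the first assertion, $C_{PK}(\gamma)=1$, is correct and complete: the weighted Korn identity $2\|\nabla_s u\|^2_{L^2(\mu)} = \|\nabla u\|^2_{L^2(\mu)} + \|\operatorname{div} u - \nabla\phi\cdot u\|^2_{L^2(\mu)} - \int\langle(\nabla^2\phi)u,u\rangle\,d\mu$ is right; the decomposition $u=Mx+v$ with $v$ orthogonal to affine functions and $M=S+A$ works as claimed ($\nabla_s(Mx)=S$ and $\langle S,\nabla_s v\rangle_{L^2(\gamma)}=0$ by Stein's identity since $v$ is orthogonal to linears); the spectral gap $2$ on the degree-$\ge 2$ Hermite subspace gives $\|\nabla v\|^2\ge 2\|v\|^2$, hence $2\|\nabla_s v\|^2\ge\|v\|^2$; and the extremizer $(x_1x_2,\,1-x_1^2,\,0,\dots,0)$ saturates the bound.

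The second assertion, $C_P(\mu)<\infty\Rightarrow C_{PK}(\mu)<\infty$, is not established. The preliminary reductions are correct and would be the right first steps: the identity $\int|\nabla\phi|^2 f^2\,d\mu = \int(\Delta\phi)f^2\,d\mu + 2\int f\,\nabla\phi\cdot\nabla f\,d\mu$, the growth hypothesis, and Young's inequality do give $\int|\nabla\phi|^2 f^2\,d\mu \le A_\epsilon\|f\|_{L^2(\mu)}^2 + c\|\nabla f\|_{L^2(\mu)}^2$, and hence a weighted second Korn inequality $\|\nabla u\|^2_{L^2(\mu)}\le 4\|\nabla_s u\|^2_{L^2(\mu)} + 2C'_\mu\|u\|^2_{L^2(\mu)}$. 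But the upgrade to Poincar\'e--Korn is exactly where you stop, and the gap is genuine. Feeding second Korn into Poincar\'e gives $\|u\|^2\le C_P\|\nabla u\|^2\le 4C_P\|\nabla_s u\|^2 + 2C_P C'_\mu\|u\|^2$, which cannot be absorbed because $C_P C'_\mu$ need not be small. The compactness route you sketch also cannot close: $C_P(\mu)<\infty$ does not imply compactness of $H^1(\mu)\hookrightarrow L^2(\mu)$ (a smoothed version of $e^{-|x|}$ satisfies all the hypotheses and its generator has essential spectrum above the gap), and the concentration-compactness repair you outline is only a heuristic --- rigid motions are global objects and do not furnish a profile decomposition for escaping mass, the cutoff errors in the weighted Korn identity involve the uncontrolled quantities $\nabla\phi\cdot u$ and $\langle\nabla^2\phi\,u,u\rangle$ at infinity, and no uniform-tightness mechanism for $|w_k|^2\,d\mu$ is supplied. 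A direct, quantitative argument is required to finish, and the proposal does not contain one.
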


%Beyond the Gaussian case, \cite{CDHMM} also gave explicit sufficient conditions for finiteness of $C_{PK}$. 

In \cite{CDHMM}, the following conjecture is proposed regarding the rigidity of the Poincar\'e--Korn inequality:
\begin{conjecture}\label{conj:CDHMM}
If $d\mu = e^{-\phi}dx$ is centered and isotropic (i.e., $\int x d\mu = 0$ and $\int x x^T d\mu = \operatorname{Id}$), and satisfies $\nabla^2 \phi \geq \operatorname{Id}$, then $C_{PK}(\mu)\geq C_{PK}(\gamma)$, with equality   only if $\mu = \gamma$.  
\end{conjecture}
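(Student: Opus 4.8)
The plan is to recast the statement as a spectral gap problem for a ``Korn Laplacian'' and then attack it with a Bochner--Weitzenb\"ock identity, in the spirit of the Bakry--\'Emery proof of rigidity for the ordinary Poincar\'e inequality. Write
\[
\Lambda(\mu):=\inf\Bigl\{\|\nabla_{s}u\|_{L^2(\mu)}^2\big/\|u\|_{L^2(\mu)}^2\ :\ u\in\mathcal C,\ u\perp\mathcal A,\ u\neq 0\Bigr\},
\]
so that $C_{PK}(\mu)=1/(2\Lambda(\mu))$ and, by the Proposition, $\Lambda(\gamma)=1/2$; the claim becomes $\Lambda(\mu)\le\Lambda(\gamma)$ with equality only for $\gamma$. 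Since $\nabla^2\phi\ge\operatorname{Id}$ yields a Poincar\'e inequality and hence (via the Proposition) finiteness of $C_{PK}(\mu)$, the operator $K_\mu u:=-\operatorname{div}_\mu(\nabla_{s}u)$, with $\operatorname{div}_\mu$ the $\mu$-weighted divergence of symmetric-tensor fields, is nonnegative and self-adjoint on $L^2(\mu;\R^n)$ modulo $\mathcal A$ and the constants, with $\inf\operatorname{spec}=\Lambda(\mu)$; for $\phi$ with enough growth/regularity this is attained at some $u$ solving $K_\mu u=\Lambda(\mu)\,u$ (otherwise one runs the argument on almost-minimizers). Testing this equation against $\mathcal A$ and against constants shows both Lagrange multipliers vanish. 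For orientation I would first record the Gaussian extremizers: for $n\ge 3$ these are the ``twisted rotations'' $u(x)=x_k\,(Ax)$ with $A$ antisymmetric acting in a $2$-plane orthogonal to $e_k$ (so $\nabla_{s}u=\tfrac12(Ax\otimes e_k+e_k\otimes Ax)$ and the Rayleigh quotient equals exactly $\tfrac12$), and in $n=2$ the field $x_1(Ax)$ minus its $\mu$-mean.

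The engine is the Weitzenb\"ock identity for vector fields on the weighted manifold $(\R^n,e^{-\phi}dx)$: for every admissible $u$,
\[
\|\nabla_{s}u\|_{L^2(\mu)}^2=\|\nabla_{a}u\|_{L^2(\mu)}^2+\|\operatorname{div}_\mu u\|_{L^2(\mu)}^2-\int\langle(\nabla^2\phi)\,u,\,u\rangle\,d\mu,
\]
where $\nabla_{a}u:=\tfrac12(\nabla u-(\nabla u)^T)$. I would apply this to the extremizer $u$, bound the curvature term below by $\|u\|_{L^2(\mu)}^2$ using $\nabla^2\phi\ge\operatorname{Id}$, and use the isotropy normalization $\int xx^Td\mu=\operatorname{Id}$ to evaluate the remaining second moments. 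What is left --- and this is the step I am least sure how to complete --- is the estimate $\|\nabla_{a}u\|_{L^2(\mu)}^2+\|\operatorname{div}_\mu u\|_{L^2(\mu)}^2\le\tfrac32\|u\|_{L^2(\mu)}^2$, which would give $\Lambda(\mu)\|u\|^2=\|\nabla_{s}u\|^2\le\tfrac12\|u\|^2$; I would try to extract it from the eigenvalue equation itself (which ties the ``curl'' of $u$ and $\operatorname{div}_\mu u$ back to $\Lambda(\mu)u$), the Poincar\'e inequality applied to the scalar $\operatorname{div}_\mu u$, and a second use of the Korn/Poincar\'e inequality. A cautionary point that rules out the lazy route: inserting the Gaussian-type field $u=x_k(Ax)$ into the Rayleigh quotient for a \emph{general} isotropic $\mu$ with $\nabla^2\phi\ge\operatorname{Id}$ produces the value $1/\!\int x_k^2(x_1^2+x_2^2)\,d\mu$, and Harg\'e's Gaussian correlation inequality gives $\int x_k^2(x_1^2+x_2^2)\,d\mu\le 2$, i.e.\ the \emph{wrong} direction; so one genuinely has to use that $u$ solves the eigenvalue equation rather than being an arbitrary competitor.

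For rigidity, suppose $\Lambda(\mu)=1/2$. Equality then forces, among other things, $(\nabla^2\phi-\operatorname{Id})u=0$ $\mu$-almost everywhere. Because $u$ solves the elliptic system $K_\mu u=\tfrac12 u$, it cannot vanish on an open subset of $\operatorname{supp}\mu$ (unique continuation), so $\nabla^2\phi\equiv\operatorname{Id}$ on $\operatorname{supp}\mu$; together with the centering and isotropy normalizations this upgrades to $\phi(x)=\tfrac12|x|^2+\text{const}$, i.e.\ $\mu=\gamma$. I expect the unique-continuation/propagation step, and the regularity of the extremal $u$ needed to license all the integrations by parts, to require the most technical care once the Bochner estimate is in hand; a possible alternative to the whole scheme would be to transport the Gaussian extremizer to the $\mu$-side via Caffarelli's $1$-Lipschitz map, but controlling how $\nabla_{s}$ distorts under a non-affine change of variables looks at least as delicate.
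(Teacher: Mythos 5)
Your proposal is a genuine (and technically ambitious) spectral attack on the conjecture, but it contains an admitted gap and, more importantly, misses the observation that the paper uses to dispose of the statement. The paper does not prove the conjecture by any Bochner/Weitzenb\"ock analysis; it proves Proposition \ref{prop_conj_triviale}, which says that \emph{the hypotheses of the conjecture already force} $\mu=\gamma$. The argument (Appendix A) is short: Caffarelli's contraction theorem supplies a $1$-Lipschitz transport $T$ pushing $\gamma$ to $\mu$; comparing $\int|x-y|^2$ under $\mu^{\otimes 2}$ and $\gamma^{\otimes 2}$ and using isotropy forces $|T(x)-T(y)|=|x-y|$ everywhere, so $T$ is a surjective isometry, hence affine by Mazur--Ulam, and centering plus isotropy pin it down as the identity (the non-full-support case is handled by a Gaussian convolution). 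Thus the inequality $C_{PK}(\mu)\ge C_{PK}(\gamma)$ and the rigidity both hold vacuously, and the conjecture, while true, carries no content about the Poincar\'e--Korn constant. You actually mention Caffarelli's map as an ``alternative,'' but only in the role of transporting extremizers and tracking how $\nabla_s$ distorts---which is indeed delicate and unnecessary; the map kills the problem before any functional inequality is invoked.

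On the internal merits of your route: the Weitzenb\"ock identity you state is the right weighted analogue of $\|\nabla_s u\|^2=\|\nabla_a u\|^2+\|\operatorname{div} u\|^2$, and your warning that test functions alone go the wrong way (via Harg\'e) is a sound observation. But the crucial estimate $\|\nabla_{a}u\|^2+\|\operatorname{div}_\mu u\|^2\le\tfrac32\|u\|^2$ is exactly what is missing, and you say so yourself; without it the Bochner inequality does not close. Moreover, even if you closed it, you would have proved the conjecture without noticing that its hypothesis class is a single point---so the statement you would have established is weaker than what the paper shows, namely that the conjecture's premises are too strong to test rigidity of $C_{PK}$ at all. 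If you want a nontrivial rigidity statement, you need to weaken $\nabla^2\phi\ge\operatorname{Id}$ to something like the paper's moment assumption, which is precisely what Question \ref{qu:main} and Theorem \ref{thm:MainResult} are about.
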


It turns out that the conjectured statement  is indeed true, but the hypotheses are too strong to capture   salient rigidity properties of the Poincar\'e--Korn constant.  Namely, the following can be derived as a consequence of known stability results for the Bakry--\'Emery theorem on $\mathbb{R}^n$ \cite{CZ17}, or using Caffarelli's contraction theorem (see Appendix A). We make no claim of originality for this statement, which was  known in some communities. 
\begin{proposition}\label{prop_conj_triviale}
If $d\mu = e^{-\phi}dx$ is centered, isotropic,  and satisfies $\nabla^2 \phi \geq \operatorname{Id}$, then $\mu = \gamma$.  
\end{proposition}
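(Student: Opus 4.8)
The natural approach is via optimal transport, using Caffarelli's contraction theorem (the route taken in Appendix~A). Write $d\mu = e^{-\phi}\,dx$ with $\nabla^2\phi \geq \operatorname{Id}$, and note that the potential $x\mapsto \tfrac12|x|^2 + \tfrac n2\log(2\pi)$ of $\gamma$ has Hessian exactly $\operatorname{Id}$. Caffarelli's theorem then asserts that the Brenier map $T = \nabla\psi$ (with $\psi$ convex) transporting $\gamma$ onto $\mu$ is $1$-Lipschitz; in particular it admits a continuous representative and satisfies $|T(x)-T(y)| \leq |x-y|$ for all $x,y \in \R^n$.

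Next I would compare the ``pair energies'' of $T$ and of the identity map. If $X,Y$ are independent with law $\gamma$, then $T(X),T(Y)$ are independent with law $\mu$, so expanding the square and using that $\mu$ is centered and isotropic,
\[
\iint |T(x)-T(y)|^2\, d\gamma(x)\,d\gamma(y) = 2\int |z|^2\, d\mu(z) - 2\Big| \int z\, d\mu(z) \Big|^2 = 2\Tr(\Cov_\mu) = 2n .
\]
The same computation with $T$ replaced by the identity (equivalently, using that $\gamma$ is centered and isotropic) gives $\iint |x-y|^2\,d\gamma\,d\gamma = 2n$ as well. Since $|T(x)-T(y)|^2 \leq |x-y|^2$ holds pointwise while the two integrals coincide, we must have $|T(x)-T(y)| = |x-y|$ for $(\gamma\otimes\gamma)$-almost every $(x,y)$, and hence --- $T$ being continuous and $\gamma\otimes\gamma$ having full support --- for \emph{every} $(x,y)\in\R^n\times\R^n$.

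Finally I would invoke the elementary fact that a distance-preserving self-map of $\R^n$ is an affine isometry: replacing $T$ by $T - T(0)$ and using polarization, $T - T(0)$ preserves inner products, hence is an orthogonal linear map, so $T(x) = Ux + b$ with $U\in O(n)$. The centering of both $\mu$ and $\gamma$ forces $b = 0$: indeed $\int T\,d\gamma = b$ while $\int T\,d\gamma = \int z\,d\mu = 0$. Then $\mu = T_{\#}\gamma = U_{\#}\gamma = \gamma$ because $\gamma$ is rotationally invariant, which proves the proposition.

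There is no substantive obstacle here --- Caffarelli's theorem carries all the analytic weight, and the remaining steps are a soft energy comparison together with the rigidity of Euclidean isometries. If one prefers to avoid optimal transport, the same conclusion follows from the Brascamp--Lieb inequality applied to the linear functionals $x\mapsto \langle v,x\rangle$, $v\in\R^n$: it gives $|v|^2 = \ovar_\mu(\langle v,x\rangle) \leq \int \langle v,(\nabla^2\phi)^{-1}v\rangle\, d\mu \leq |v|^2$, where the first equality is isotropy and the last inequality uses $(\nabla^2\phi)^{-1}\preceq\operatorname{Id}$; equality throughout forces $\langle v,(\nabla^2\phi)^{-1}v\rangle = |v|^2$ almost everywhere for each $v$, so $\nabla^2\phi = \operatorname{Id}$ everywhere, and integrating together with the centering condition yields $\phi(x) = \tfrac12|x|^2 + \mathrm{const}$, i.e.\ $\mu = \gamma$.
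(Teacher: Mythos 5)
Your primary argument follows the same route as the paper's Appendix~A: Caffarelli's contraction theorem to get a $1$-Lipschitz Brenier map $T\colon\gamma\to\mu$, the pair-energy comparison forcing $|T(x)-T(y)|=|x-y|$ everywhere, and then rigidity of Euclidean isometries. The one genuine difference is the finish: the paper invokes the Mazur--Ulam theorem, which requires surjectivity of $T$, and therefore splits into the full-support case and a separate Gaussian-convolution reduction when $\mu$ lacks full support. You instead use the elementary polarization argument, which shows that \emph{any} distance-preserving self-map of $\mathbb{R}^n$ (surjective or not) is an affine orthogonal map; this is valid because Euclidean space is an inner-product space, and it lets you skip the case distinction entirely. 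That is a mild but real simplification. Your closing remark about Brascamp--Lieb is a genuinely different and somewhat more elementary route: isotropy gives $\ovar_\mu(\langle v,\cdot\rangle)=|v|^2$, while Brascamp--Lieb together with $(\nabla^2\phi)^{-1}\preceq\operatorname{Id}$ gives the matching upper bound, and the equality case forces $\nabla^2\phi\equiv\operatorname{Id}$, hence $\phi(x)=\tfrac12|x|^2+a\cdot x+b$ with $a=0$ by centering. This avoids optimal transport altogether (the Brascamp--Lieb inequality is the only analytic input) and arguably makes the rigidity more transparent; the one small point worth spelling out in that version is that $\langle(\operatorname{Id}-(\nabla^2\phi)^{-1})v,v\rangle=0$ for a PSD matrix forces $(\nabla^2\phi)^{-1}v=v$, and that one should run this over a countable dense set of $v$ before concluding $\nabla^2\phi=\operatorname{Id}$ a.e.\ and then everywhere by continuity. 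Both of your arguments are correct.
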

Therefore, to study rigidity of the Poincar\'e--Korn inequality, the assumption of a uniformly convex potential in Conjecture \ref{conj:CDHMM} should be replaced by something else.  A natural choice is a  moment assumption, which we now define.
\begin{definition}[Moment Assumption]
We say that  $\mu$ satisfies the {\bf moment assumption} if, for all $1\leq i ,j,k\leq n$,
\begin{align*}
\int x_i d\mu &= \int x_i d\gamma = 0;\\
\int x_i x_j d\mu &= \int x_i x_j d\gamma = \delta_{ij};\\
\int x_i x_j x_k d\mu &= \int x_i x_j x_k d\gamma  = 0;
\end{align*}
and, when $i\neq j$, 
\begin{align*}
\int (x_i^2+ x_j^2) x_j^2 d\mu &= \int (x_i^2+ x_j^2) x_j^2  d\gamma = 4.
\end{align*}
\end{definition}
\begin{remark}
The first two lines of the moment assumption correspond to $\mu$ being centered and isotropic.  The third and fourth lines in the moment assumption ensure that $\mu$ and $\gamma$ share  mixed third moments and select mixed fourth moments, respectively.    
\end{remark}

There are many interesting probability measures that satisfy the moment assumption.  For example, any product measure whose individual factors share moments up to order 4 with the standard normal will satisfy the moment assumption (and so will mixtures of these measures, and so forth...).  Thus, a nontrivial reformulation of Conjecture \ref{conj:CDHMM} is as follows:
\begin{question}\label{qu:main}
If $\mu$ satisfies the moment assumption, is the lower bound $C_{PK}(\mu)\geq C_{PK}(\gamma)$ true, with equality   only if $\mu = \gamma$?  
\end{question}

The   moment assumption is motivated by the  form of extremal functions in the Gaussian Poincar\'e-Korn inequality. In particular, by considering these as test functions in the Poincar\'e--Korn inequality for $\mu$,  the inequality $C_{PK}(\mu)\geq C_{PK}(\gamma)$   is a consequence of  the moment assumption  (see Proposition \ref{prop:MomentImplications} in the sequel).  So, it is the rigidity phenomenon that is interesting. We remark that the work of Serres \cite{Ser23} already highlights that given a reference measure satisfying a Poincaré inequality with known sharp constant and extremal function, it is possible to study stability of functional inequalities within classes of measures for which the moments of the extremal function match with those under the reference measure. 

In this note, we resolve  Question \ref{qu:main} in the affirmative, and further establish quantitative stability of the Poincar\'e--Korn constant.  Such stability results on functional inequalities have been the subject of  some recent attention in analysis. For example, there have been many results on stability for sharp functions in classical functional inequalities, including Sobolev inequalities and isoperimetric inequalities, see \cite{Fig13, Fig23} for surveys. Stability results under moment constraints have been studied for Poincar\'e inequalities \cite{Ut89, CPU94, CFP18, Ser23}, eigenvalues of diffusion operators \cite{Ser23b}, as well as fractional \cite{AH} and free \cite{CFM} Poincar\'e inequalities. These have been obtained using the combination of Stein's method and variational arguments that we shall use here. In another direction, there have been stability results for sharp constants under convexity or curvature assumptions \cite{CMS, CF20, FGS, MO}. 

Our stability result is with respect to   the Zolotarev distance of order 2, which controls the same topology as the more familiar $W_2$ Kantorovich--Wasserstein distance \cite{BH00}.  
\begin{definition}
For two Borel probability measures $\mu,\nu$ on $\mathbb{R}^n$, the {\bf Zolotarev distance of order 2} is defined as
$$
d_{Zol,2}(\mu,\nu) := \sup_{\sup_{x} \|\nabla^2 f(x)\|_{2}\leq 1}  \int f d\mu - \int f d\nu,
$$
where the supremum is over all $f: \mathbb{R}^n \to \mathbb{R}$ in $C^2$ with $\sup_{x\in \mathbb{R}^n } \|\nabla^2 f(x)\|_{2}\leq 1$. 
\end{definition}

Our main result is as follows.

\begin{theorem}[Stability]\label{thm:MainResult}
If $\mu$ satisfies the moment assumption, then $C_{PK}(\mu)\geq C_{PK}(\gamma)=1$, and 
$$
d_{Zol,2}(\mu,\gamma) \leq  c\,  n^2 \sqrt{C_{PK}(\mu) (C_{PK}(\mu)- C_{PK}(\gamma))},
$$
where $c$ is a universal constant.
\end{theorem}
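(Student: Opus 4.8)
\emph{Proof strategy.} The plan is to combine Stein's method for the Gaussian with the variational (eigenvalue) structure of the Poincar\'e--Korn functional, in the spirit of the stability results for Poincar\'e inequalities in \cite{CFP18, Ser23}. Write $\delta := C_{PK}(\mu)-C_{PK}(\gamma) = C_{PK}(\mu)-1$ throughout (the bound $C_{PK}(\mu)\ge 1$ being part of the conclusion).

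\emph{Step 1: a Stein kernel and reduction to a discrepancy.} Since $\mu$ is centered and isotropic, the linear functional $w \mapsto \int x\cdot w\, d\mu$ on vector fields depends only on $\nabla_{s} w$ (it is unchanged by adding to $w$ an element of $\mathcal{A}$ or a constant) and, by \eqref{ineq:PK}, is bounded on the closed subspace $\{\nabla_{s}w : w\in\mathcal C\}$ of $L^2(\mu)$ (symmetric-matrix valued) with norm $\le\sqrt{2n\,C_{PK}(\mu)}$. Riesz representation yields a symmetric matrix field $\tau_\mu\in L^2(\mu)$ with $\int \langle \tau_\mu, \nabla_{s} w\rangle\, d\mu = \int x\cdot w\, d\mu$ for all $w$; taking $w(x)=x$ gives $\int \Tr \tau_\mu\, d\mu = n$, so $\|\tau_\mu - \operatorname{Id}\|_{L^2(\mu)}^2 = \|\tau_\mu\|_{L^2(\mu)}^2 - n$. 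Given $g\in C^2$ with $\sup_x\|\nabla^2 g(x)\|_2\le 1$, let $\phi := -\int_0^\infty P_t(g - \int g\, d\gamma)\, dt$ with $(P_t)$ the Ornstein--Uhlenbeck semigroup; then $\Delta\phi - x\cdot\nabla\phi = g - \int g\, d\gamma$ and the commutation $\|\nabla^2 P_t g(x)\|_2\le e^{-2t}$ gives $\|\nabla^2\phi(x)\|_2\le \tfrac12$ everywhere. Integrating the equation against $\mu$ and using the defining identity of $\tau_\mu$ with $w = \nabla\phi$ (so that $\nabla_{s}w=\nabla^2\phi$),
\[
\int g\, d\mu - \int g\, d\gamma = \int (\Delta\phi - x\cdot\nabla\phi)\, d\mu = \int \langle \operatorname{Id} - \tau_\mu,\ \nabla^2\phi\rangle\, d\mu \le \frac{\sqrt n}{2}\,\|\tau_\mu - \operatorname{Id}\|_{L^2(\mu)} .
\]
So it suffices to prove the Stein discrepancy bound $\|\tau_\mu - \operatorname{Id}\|_{L^2(\mu)}^2 \le c\,n^{3}\, C_{PK}(\mu)\,\delta$.

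\emph{Step 2: the Gaussian extremizers and the moment assumption.} The fields realizing $C_{PK}(\gamma)=1$ are the degree-two ``co-exact'' vector fields, i.e. those $e$ with polynomial components of degree $\le 2$, $\int e\, d\gamma = 0$ and $x\cdot e = \operatorname{div} e$; for $i\ne j$ the field $e^{ij}$ with $e^{ij}_i = x_j^2 - 1$, $e^{ij}_j = -x_i x_j$ (all other components zero) is such a field, and these together with enough further degree-two co-exact fields span the extremizer space. For each extremizer $\inf_{a\in\mathcal A}\|e-a\|^2_{L^2(\gamma)} = \|e\|_{L^2(\gamma)}^2 = 2\|\nabla_{s}e\|_{L^2(\gamma)}^2$, and the moment assumption is tailored so that this persists for $\mu$: $\|\nabla_{s}e^{ij}\|_{L^2(\mu)}^2 = \tfrac32$ uses only isotropy, whereas
\[
\|e^{ij}\|_{L^2(\mu)}^2 = \int (x_j^2-1)^2 + x_i^2 x_j^2\, d\mu = \int (x_i^2+x_j^2)x_j^2\, d\mu - 2\int x_j^2\, d\mu + 1 = 4 - 2 + 1 = 3
\]
uses precisely the fourth-moment condition, and matched first--third moments give $e^{ij}\in\mathcal C$, $e^{ij}\perp\mathcal A$ in $L^2(\mu)$. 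Hence $\|e\|_{L^2(\mu)}^2 = 2\|\nabla_{s}e\|_{L^2(\mu)}^2$, which already forces $C_{PK}(\mu)\ge 1$; the essential point is that \eqref{ineq:PK} for $\mu$ is \emph{nearly tight} at each $e$, with slack $2\delta\,\|\nabla_{s}e\|_{L^2(\mu)}^2$ (modulo the fourth-moment deviations of $\mu$, which will be controlled by the same mechanism).

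\emph{Step 3: extracting $\sqrt\delta$ and closing the estimate.} Write $\tau_\mu - \operatorname{Id} = \nabla_{s}v$ with $v\in\mathcal C$ chosen $L^2(\mu)$-orthogonal to $\mathcal A$ (the range of $\nabla_s$ is closed by \eqref{ineq:PK}); then $\|\tau_\mu - \operatorname{Id}\|_{L^2(\mu)}^2 = \int x\cdot v\, d\mu$, and, since $x\cdot e^{ij}-\operatorname{div} e^{ij}\equiv 0$, one has $\langle \tau_\mu - \operatorname{Id},\, \nabla_{s}e\rangle_{L^2(\mu)} = \int(x\cdot e - \operatorname{div} e)\, d\mu = 0$ for each extremizer $e$. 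Feeding $v + s e$ into \eqref{ineq:PK} for $\mu$ and using Step~2 gives, for all $s\in\R$,
\[
0 \le \bigl(2C_{PK}(\mu)\|\tau_\mu - \operatorname{Id}\|_{L^2(\mu)}^2 - \|v\|_{L^2(\mu)}^2\bigr) - 2s\,\langle v, e\rangle_{L^2(\mu)} + 2s^2\delta\,\|\nabla_{s}e\|_{L^2(\mu)}^2 ,
\]
whose nonnegativity (the constant term is $\ge0$, being \eqref{ineq:PK} at $v$) forces, via the discriminant, $\langle v, e\rangle_{L^2(\mu)}^2 \le 4\,C_{PK}(\mu)\,\delta\,\|\nabla_{s}e\|_{L^2(\mu)}^2\,\|\tau_\mu - \operatorname{Id}\|_{L^2(\mu)}^2$. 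Let $P$ be the $L^2(\mu)$-orthogonal projection onto the span of the extremizers and split $v = Pv + (I-P)v$; since $Pv$ is a degree-$\le 2$ polynomial field, $\int x\cdot Pv\, d\mu = \int x\cdot Pv\, d\gamma = 0$ by the moment assumption, hence $\|\tau_\mu-\operatorname{Id}\|_{L^2(\mu)}^2 = \int x\cdot(I-P)v\, d\mu$. Summing the discriminant bound over an $L^2(\mu)$-orthonormal basis of extremizers controls $\|Pv\|_{L^2(\mu)}$, and a \emph{spectral-gap-above-the-ground-state} estimate — for $\gamma$ the extremizers form exactly the bottom eigenspace of the symmetrized-gradient form and the next eigenvalue is strictly larger, transferred to $\mu$ with error governed by the moment assumption (and using that the splitting is, relative to $\gamma$, nearly $\nabla_{s}$-orthogonal) — then forces $(I-P)v$ to be small, closing $\|\tau_\mu - \operatorname{Id}\|_{L^2(\mu)}^2 \lesssim n^{3}\, C_{PK}(\mu)\,\delta$. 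Together with Step~1 this gives $d_{Zol,2}(\mu,\gamma)\le c\,n^2\sqrt{C_{PK}(\mu)(C_{PK}(\mu)-C_{PK}(\gamma))}$.

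\emph{Main obstacle.} The crux is Step~3. A priori we control only the \emph{bottom} of the Poincar\'e--Korn spectrum of $\mu$, namely $C_{PK}(\mu)$ itself, whereas bounding $(I-P)v$ requires an effective spectral gap separating the extremizer subspace from its complement \emph{for $\mu$}; the moment assumption supplies such a gap only against polynomial test fields, but $(I-P)v$ is not polynomial. Overcoming this requires borrowing the exact spectral picture of $\gamma$ and transferring it to $\mu$ through quantitative $L^2(\gamma)\leftrightarrow L^2(\mu)$ comparisons — plausibly bootst
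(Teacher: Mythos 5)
Your Step~3 is exactly where the argument breaks, and you say so yourself: to bound $(I-P)v$ you need an effective spectral gap \emph{above} the extremizer subspace for $\mu$, and the moment assumption only constrains polynomial test fields. That gap is genuine --- nothing in the hypotheses gives you a spectral gap for $\mu$ off the polynomial sector, and there is no reason it can be ``transferred'' from $\gamma$; indeed the paper points out (around Lemma~\ref{lem:approxEL}) that general $\mu$ need not even have an extremizer, so there is no clean eigenspace picture to work with.

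The paper sidesteps this entirely by never trying to control the Stein discrepancy $\|\tau_\mu-\operatorname{Id}\|_{L^2(\mu)}$. The key is Lemma~\ref{lem:approxEL}: a quantitative, dual form of near-extremality, which says that if a fixed $u\in\mathcal C$ comes within factor $(1-(\epsilon/2)^2/2)$ of saturating \eqref{ineq:PK}, then the Euler--Lagrange integration-by-parts identity $\int(u-a_u)\cdot(v-a_v)\,d\mu = 2C_{PK}\int(\nabla_s u)\cdot(\nabla_s v)\,d\mu$ holds up to error $\epsilon\,C_{PK}\|\nabla_s u\|\|\nabla_s v\|$ \emph{for every} $v\in\mathcal C$, not only for $v$ in the polynomial/extremizer sector. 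The proof of that lemma is a short Hilbert-space argument on the quotient $\mathcal C/\mathcal A$ using Riesz representation and ``opening the square''; it does not require an extremizer or a spectral gap. Plugging in for $u$ the explicit near-extremizers $u^{ij}$ of Proposition~\ref{prop:MomentImplications} (whose $\epsilon$ is forced to be $O(\sqrt{\delta})$ by the moment assumption) and for $v$ components of a general matrix field $V$ then yields the approximate Stein identity of Lemma~\ref{lem:ApproxStein}, with error $K\sqrt{C_{PK}(C_{PK}-1)}\sum_{ij}\|\nabla[V]_{ij}\|_{L^2(\mu)}$. Finally, the Barbour solution $\varphi_g$ to the Stein equation (after subtracting its affine-plus-diagonal-quadratic part, which is killed by the moment assumption) is rewritten as $\varphi_g = V^T x$ with $V(x)=|x|^{-2}x\varphi_g^T(x)$; the gradient bounds $\|\nabla[V]_{ij}\|_{L^2(\mu)}\le 5/2$ close the estimate and produce the $n^2$ factor. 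Your Steps~1 and~2 are in the same spirit as the paper (same extremizer fields, same use of Barbour's solution with second-order regularization), but your Step~3 relies on structure you cannot justify. If you want to repair it, replace the direct estimate on $\|\tau_\mu-\operatorname{Id}\|$ by the duality argument of Lemma~\ref{lem:approxEL}, which is exactly designed to make the near-extremality of $u^{ij}$ ``talk'' to an arbitrary test field without ever invoking a spectrum.

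Two smaller points. First, your reduction in Step~1 needs $\tau_\mu-\operatorname{Id}\in\operatorname{range}(\nabla_s)$; closedness of the range only gives $\tau_\mu$ in that closed subspace, which is fine, but you should say so rather than asserting it. Second, the bound you announce in Step~1 is $\|\tau_\mu-\operatorname{Id}\|_{L^2(\mu)}^2\lesssim n^3 C_{PK}\delta$, which after the $\sqrt n/2$ prefactor gives $d_{Zol,2}\lesssim n^2\sqrt{C_{PK}\delta}$ --- consistent with the target --- but note the paper gets the $n^2$ directly from $\sum_{i,j=1}^n\|\nabla[V]_{ij}\|_{L^2(\mu)}\le (5/2)n^2$ rather than from a discrepancy estimate, so even a completed version of your route would likely produce worse dimensional constants.
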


The following    is now immediate, and affirmatively answers   Question \ref{qu:main}.  %In particular, it resolves the nontrivial reformulation of Conjecture \ref{conj:CDHMM} where uniform log-concavity is replaced by the weaker assumption that  $\mu$ and $\gamma$ share  mixed third    and select mixed fourth moments.
\begin{corollary}[Rigidity]
If $\mu$ satisfies the moment assumption, then   $C_{PK}(\mu) \geq C_{PK}(\gamma)$, with equality only if $\mu = \gamma$. 
\end{corollary}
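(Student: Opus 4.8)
The corollary is immediate from Theorem~\ref{thm:MainResult}: that theorem already gives $C_{PK}(\mu)\ge C_{PK}(\gamma)$, and if equality holds then the right-hand side of the stability estimate is $0$, so $d_{Zol,2}(\mu,\gamma)=0$; since $d_{Zol,2}$ dominates integration against $C^2$ functions with bounded Hessian, a class that separates Borel probability measures, this forces $\mu=\gamma$. So the real content is Theorem~\ref{thm:MainResult}, which I would prove in three steps. \emph{Step 1 ($C_{PK}(\mu)\ge C_{PK}(\gamma)=1$).} Use the explicit extremizers of the Gaussian inequality: for $i\ne j$ set $u^{(i,j)}(x):=x_ix_j\,e_i+(1-x_i^2)\,e_j$. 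A short computation gives that $\nabla_s u^{(i,j)}$ is linear, that $\|\nabla_s u^{(i,j)}\|_{L^2(\gamma)}^2=3/2$, and that $u^{(i,j)}\perp\mathcal A$ with $\inf_{a\in\mathcal A}\|u^{(i,j)}-a\|_{L^2(\gamma)}^2=\|u^{(i,j)}\|_{L^2(\gamma)}^2=3$, so $u^{(i,j)}$ saturates \eqref{ineq:PK} for $\gamma$. The moment assumption is tailored precisely so that these computations are unchanged with $\mu$ in place of $\gamma$: $\|\nabla_s u^{(i,j)}\|_{L^2(\mu)}^2=3/2$ and $\langle u^{(i,j)},Ax\rangle_{L^2(\mu)}=0$ use only the matching first, second and third moments, while $\|u^{(i,j)}\|_{L^2(\mu)}^2=\int\big(x_i^2x_j^2+(x_i^2-1)^2\big)\,d\mu=3$ is exactly where the fourth-moment identity $\int(x_i^2+x_j^2)x_j^2\,d\mu=4$ enters. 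Feeding $u^{(i,j)}$ into \eqref{ineq:PK} for $\mu$ gives $3\le 2C_{PK}(\mu)\cdot\tfrac32$, i.e. $C_{PK}(\mu)\ge1$ (this is Proposition~\ref{prop:MomentImplications}).

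\emph{Step 2 (a perturbation estimate).} Write $\delta:=C_{PK}(\mu)-1$. Apply \eqref{ineq:PK} for $\mu$ to the test field $u^{(i,j)}+tw$, for arbitrary $w\in\mathcal C$ and $t\in\R$; expanding both sides to second order in $t$ and using Step~1 to see that the $t^0$-terms differ by exactly $3\delta$, one gets an affine-in-$t$ lower bound sitting below a quadratic-in-$t$ upper bound for all $t$, so the discriminant is non-negative. Bounding the quadratic coefficient by $2C_{PK}(\mu)\|\nabla_s w\|_{L^2(\mu)}^2$ via \eqref{ineq:PK} applied to $w$ itself, and absorbing a leftover linear term using $\|\nabla_s u^{(i,j)}\|_{L^2(\mu)}^2=3/2$, yields, for all $w\in\mathcal C$ and all $i\ne j$,
\[
\big|\,P^{(i,j)}_\mu(w)\,\big|\ \le\ C\,\sqrt{C_{PK}(\mu)\,\delta}\ \|\nabla_s w\|_{L^2(\mu)},\qquad P^{(i,j)}_\mu(w):=\langle u^{(i,j)},w\rangle_{L^2(\mu)}-2\langle\nabla_s u^{(i,j)},\nabla_s w\rangle_{L^2(\mu)}.
\]
Because $u^{(i,j)}$ is an eigenfunction of the Gaussian Poincar\'e--Korn operator, the corresponding functional vanishes identically for $\gamma$, so $P^{(i,j)}_\mu(w)=\int\Phi^{(i,j)}[w]\,(d\mu-d\gamma)$ for the first-order differential expression $\Phi^{(i,j)}[w]:=\langle u^{(i,j)},w\rangle-2\langle\nabla_s u^{(i,j)},\nabla_s w\rangle$; this is a quantitative ``near-criticality'' statement for $\mu$.

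\emph{Step 3 (Stein's method).} Given $g\in C^2$ with $\sup_x\|\nabla^2 g(x)\|_2\le1$, solve the Ornstein--Uhlenbeck--Stein equation $\Delta f-\langle x,\nabla f\rangle=g-\int g\,d\gamma$; by the Mehler formula $\sup_x\|\nabla^2 f(x)\|_2\le\tfrac12$. Since $\int(\Delta f-\langle x,\nabla f\rangle)\,d\gamma=0$, we have $\int g\,d\mu-\int g\,d\gamma=\int(\Delta f-\langle x,\nabla f\rangle)\,(d\mu-d\gamma)$. The key is then to exhibit vector fields $w^{(i,j)}\in\mathcal C$ --- each a fixed linear combination (with $x$-independent coefficients) of the partial derivatives $\partial_k f$ plus a constant vector field, so that $\nabla_s w^{(i,j)}$ is a linear combination of the $\partial_k\partial_\ell f$ and hence $\|\nabla_s w^{(i,j)}\|_{L^2(\mu)}\lesssim\sup\|\nabla^2 f\|$ --- for which $\sum_{i\ne j}\Phi^{(i,j)}[w^{(i,j)}]=\Delta f-\langle x,\nabla f\rangle$ modulo a low-degree polynomial remainder that integrates to $0$ against $\mu-\gamma$ by the moment assumption. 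Granting this, Step~2 gives
\[
\Big|\int g\,d\mu-\int g\,d\gamma\Big|=\Big|\sum_{i\ne j}P^{(i,j)}_\mu(w^{(i,j)})\Big|\ \lesssim\ \sqrt{C_{PK}(\mu)\,\delta}\ \sum_{i\ne j}\|\nabla_s w^{(i,j)}\|_{L^2(\mu)}\ \lesssim\ n^2\sqrt{C_{PK}(\mu)\,\delta},
\]
the factor $n^2$ coming from the number of pairs $(i,j)$, and taking the supremum over admissible $g$ proves the theorem.

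I expect the main obstacle to be the construction in Step~3: because the extremizers $u^{(i,j)}$ are \emph{quadratic} --- unlike the linear extremizers $x_i$ of the ordinary Gaussian Poincar\'e inequality, for which the analogous sum over $i$ telescopes trivially to $\Delta f-\langle x,\nabla f\rangle$ --- a naive choice such as summing one-component fields $(\partial_k f)\,e_\ell$ over coordinate pairs leaves spurious lower-order terms (typically involving $|x|^2$ and the all-ones direction). Arranging the $w^{(i,j)}$ so that these cancel, leaving only a remainder that the precise moment data in the moment assumption can annihilate, is the delicate part, and is exactly where the fourth-moment conditions are used, mirroring the role of the second moments in the Stein-kernel stability proofs for the classical Poincar\'e inequality \cite{CFP18}.
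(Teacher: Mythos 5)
Your reduction of the corollary to Theorem~\ref{thm:MainResult} is exactly what the paper does: the theorem gives the lower bound $C_{PK}(\mu)\geq C_{PK}(\gamma)=1$, and equality forces $d_{Zol,2}(\mu,\gamma)=0$, hence $\mu=\gamma$ since the Zolotarev distance separates probability measures. That part is correct and is the paper's entire proof of the corollary.

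Since you went further and sketched a proof of Theorem~\ref{thm:MainResult} itself, a comparison is in order. Steps~1 and~2 match the paper in substance. Step~1 is Proposition~\ref{prop:MomentImplications}, with the same test fields (up to swapping $i$ and $j$) and the same bookkeeping of which moment conditions enter where. Step~2, your ``perturbation'' derivation of the approximate Euler--Lagrange relation via a quadratic-in-$t$ discriminant argument, is a legitimate alternative route to what the paper's Lemma~\ref{lem:approxEL} establishes via the Riesz representation theorem; the two give the same functional inequality up to universal constants, and your observation that $P^{(i,j)}_\mu(w)=\int\Phi^{(i,j)}[w]\,(d\mu-d\gamma)$ because each $u^{(i,j)}$ is a Gaussian eigenfunction is correct and useful.

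The gap is in Step~3, and it is precisely the step you flag as the ``main obstacle.'' Your proposed ansatz --- that the test fields $w^{(i,j)}$ be fixed \emph{linear combinations} (constant coefficients) of the partial derivatives $\partial_k f$, plus constants --- does not reproduce the paper's construction and, as far as one can see, does not work: the quadratic factors $x_ix_j$ and $(1-x_j^2)$ multiplying $w^{(i,j)}$ inside $\Phi^{(i,j)}$ produce terms such as $x_ix_j\,\partial_k f$ and $x_j^2\,\partial_k f$ whose sum over $i\neq j$ does not collapse to $\Delta f - \langle x,\nabla f\rangle$ up to a low-degree polynomial, regardless of how the constant coefficients are chosen; they generate extraneous expressions like $(\sum_{i\neq j}x_i)\,\partial f$ that are not annihilated by the moment assumption. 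The paper resolves this by passing to a \emph{matrix-valued} test function $V$ and proving the approximate Stein identity (Lemma~\ref{lem:ApproxStein}) for arbitrary such $V$, using the one-coordinate test fields $v=e_m\,[V]_{ij}$. The crucial pointwise identity $(xx^T-\operatorname{Id})\cdot V - (x\nabla^T)\cdot V = x\cdot\varphi_g - \operatorname{Tr}(\nabla\varphi_g)$ then requires only $V^T(x)x=\varphi_g(x)$, and the paper achieves this with the non-polynomial choice $V(x)=\tfrac{1}{|x|^2}\,x\,\varphi_g(x)^T$, after recentering $\varphi_f$ to $\varphi_g$ so that $\varphi_g(0)=0$ and $\nabla\varphi_g(0)=0$ (making $V$ regular at the origin with $\|\nabla[V]_{ij}\|_{L^2(\mu)}=O(1)$). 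That $\tfrac{1}{|x|^2}$ factor is the device your ansatz rules out, and the moment assumption is only used to absorb the linear and quadratic Taylor corrections $a^Tx+\tfrac12 x^TQx$ between $f$ and $g$, not to cancel polynomial remainders from a linear-in-$\nabla f$ construction.
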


\textbf{Acknowledgments.} We thank Jean Dolbeault for telling us about this problem. TC acknowledges NSF-CCF~1750430, the hospitality of the Laboratoire de Probabilités, Statistique et Modélisation (LPSM) at the Université Paris Cité, and the  Invited Professor program of the Fondation Sciences Mathématiques de Paris (FSMP).  MF was supported by the Agence Nationale de la Recherche (ANR) Grant ANR-23-CE40-0003 (Project CONVIVIALITY). This work has also received support under the program ``Investissement d'Avenir" launched by the French Government and implemented by ANR, with the reference  ANR‐18‐IdEx‐0001 as part of its program Emergence.

 \section{Proof of Main Result}

\subsection{Notation}
For a vector-valued function $u:\mathbb{R}^n\to \mathbb{R}^n$, we define 
$$
\| u\|^2_{L^2(\mu)} := \int |u|^2 d\mu,
$$ 
where $|x |$ denotes the Euclidean length of $x\in \mathbb{R}^n$.  Likewise, for a   matrix-valued function $U : \mathbb{R}^n \to M_{n\times n}(\mathbb{R})$, we define 
$$
\| U \|^2_{L^2(\mu)} := \int U \cdot U d\mu,
$$ 
where `$\cdot$' will denote the scalar product between matrices  (i.e., $A\cdot B := \sum_{ij}[A]_{ij}[B]_{ij}$, for identically dimensioned matrices $A,B\in M_{m\times n}(\mathbb{R})$). The identity matrix is  denoted by $\operatorname{Id}$.  All vectors are represented in matrix form as column vectors.  

 In what follows, for $1\leq i,j\leq n$, we let $\delta_{ij}$  denote the usual Kronecker delta function:
$$
\delta_{ij}:=\begin{cases}
1 & \mbox{if $i=j$}\\
0 & \mbox{otherwise.}
\end{cases}
$$

\subsection{Remarks on the general approach}
 
In \cite{CF20},  the  authors introduced  a general approach for establishing stability of functional inequalities based on   approximate integration-by-parts identities and Stein's method; the ideas can also be found  in the one-dimensional results of \cite{Ut89, CPU94}. Interested readers are referred to \cite{CF20} for an overview of the  method in abstract settings. We follow this general approach here.  Namely, the first step of the proof (Subsection \ref{subsec:step1}) is dedicated to proving an approximate Stein identity, which mimics the integration by parts formula for the Gaussian measure on Hermite polynomials of degree 2.  The second step (Subsection \ref{subsec:step2}) describes our implementation of Stein's method that yields the main stability result.
 
 \subsection{An approximate Stein identity}\label{subsec:step1}
 For a differentiable matrix-valued function $V: \mathbb{R}^n \to M_{n\times n}(\mathbb{R})$ and $x\in \mathbb{R}^n$, we write $(x \nabla^T) \cdot V$ to denote the scalar product of the operator $(x \nabla^T)$ and the function $V$; that is, 
 $$
  (x \nabla^T) \cdot V = \sum_{i,j=1}^n [x \nabla^T]_{ij} [V]_{ij}. := \sum_{i,j=1}^n x_i \partial_j [V]_{ij}.
 $$
  The following lemma is the main result of this section. 

\begin{lemma}[Approximate Stein Identity]\label{lem:ApproxStein}
Let $\mu$ satisfy the moment assumption.  If $V:   \mathbb{R}^n \to M_{n\times n}(\mathbb{R})$ is a matrix-valued function such that each coordinate $[V]_{ij}:\mathbb{R}^n \to \mathbb{R}$ is integrable and differentiable, with $\|\nabla [V]_{ij}\|_{L^2(\mu)}<\infty$, then 
\begin{align*}
\left| \int   (x x^T-\operatorname{Id}) \cdot V d\mu - 
  \int  (x \nabla^T) \cdot V d\mu \right|  \leq K \sqrt{   C_{PK}(C_{PK}-1)} \sum_{i,j=1}^n \|\nabla [V]_{ij} \|_{L^2(\mu)}, 
    \end{align*}
for $K := (1 + 10/ \sqrt{3}) < 7$.
\end{lemma}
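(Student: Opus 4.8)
The plan is to follow the Courtade--Fathi template of \cite{CF20}: realize $\mu$'s deviation from the Gaussian integration-by-parts formula through a Stein-type kernel, and control that kernel by the Poincar\'e--Korn deficit $C_{PK}(\mu)-C_{PK}(\gamma)$ with the help of the moment assumption. One may assume $C_{PK}(\mu)<\infty$, otherwise there is nothing to prove. Writing the left-hand side as $\sum_{i,j}\big(\int(x_ix_j-\delta_{ij})[V]_{ij}\,d\mu-\int x_i\partial_j[V]_{ij}\,d\mu\big)$ and using the triangle inequality, it suffices to bound $|\int(x_ix_j-\delta_{ij})f\,d\mu-\int x_i\partial_j f\,d\mu|$ by $K\sqrt{C_{PK}(C_{PK}-1)}\,\|\nabla f\|_{L^2(\mu)}$ for each fixed pair $(i,j)$ and scalar $f$; the diagonal case $i=j$ (where the relevant Hermite polynomial is $x_i^2-1$) is slightly different from the off-diagonal one, which presumably accounts for the two summands in $K=1+10/\sqrt3$. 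The functional annihilates constants by isotropy.

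Next, introduce the Stein kernel. By the Poincar\'e--Korn inequality, isotropy, and $\int x\cdot(Ax)\,d\mu=\Tr A=0$ for $A\in\mathcal A$, one has $|\int x\cdot u\,d\mu|=|\int x\cdot(u-\Pi_{\mathcal A}u)\,d\mu|\le\sqrt n\,\|u-\Pi_{\mathcal A}u\|_{L^2(\mu)}\le\sqrt{2nC_{PK}}\,\|\nabla_s u\|_{L^2(\mu)}$, so by Riesz representation there is a symmetric-matrix-valued $\tau_\mu$ lying in the $L^2(\mu)$-closure of $\{\nabla_s u\}$ with $\int\tau_\mu\cdot\nabla_s u\,d\mu=\int x\cdot u\,d\mu$ for every differentiable $u$ with $\|\nabla_s u\|_{L^2(\mu)}<\infty$. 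Gaussian integration by parts gives $\tau_\gamma=\operatorname{Id}$; testing against linear fields and using isotropy gives $\E_\mu[\tau_\mu]=\operatorname{Id}$, so $\tau_\mu-\operatorname{Id}$ is $\mu$-centered. Feeding the test field $w=f\,Bx$ with $B=e_ie_j^\top+e_je_i^\top$ into the defining relation, and using $\nabla_s w=\tfrac12(\nabla f\otimes Bx+Bx\otimes\nabla f)+fB$, turns the target quantity into an exact identity of the form $\int\big((\tau_\mu-\operatorname{Id})Bx\big)\cdot\nabla f\,d\mu+\int f\,\big((\tau_\mu-\operatorname{Id})\cdot B\big)\,d\mu$; the second integrand has $\mu$-mean zero and, using the third-moment condition, is also $L^2(\mu)$-orthogonal to linear functions. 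The first term is at most $\|\nabla f\|_{L^2(\mu)}\|(\tau_\mu-\operatorname{Id})Bx\|_{L^2(\mu)}$; the second is handled by noting that Poincar\'e--Korn applied to $g\,e_1$ yields a Poincar\'e inequality for $\mu$ on the orthogonal complement of affine functions with constant $2C_{PK}$, while the affine part of $f$ is controlled by $\|\nabla f\|_{L^2(\mu)}$ via the identity $\int x_kf\,d\mu=\int(\tau_\mu\nabla f)_k\,d\mu$ together with the bound on $\|\tau_\mu-\operatorname{Id}\|_{L^2(\mu)}$ proved in the next step.

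The crux is to bound $\|\tau_\mu-\operatorname{Id}\|_{L^2(\mu)}$ (and the weighted variant $\|(\tau_\mu-\operatorname{Id})Bx\|_{L^2(\mu)}$, where the fourth-moment bounds in the moment assumption dominate the weight $|Bx|^2$) by a multiple of $\sqrt{C_{PK}(C_{PK}-1)}$. Here the moment assumption is indispensable. Testing Poincar\'e--Korn against the explicit Gaussian extremal fields $u^{(k)}(x)=x_kx-(|x|^2-(n-1))e_k$ is the key computation: their symmetrized gradients $\nabla_s u^{(k)}=x_k\operatorname{Id}-\tfrac12(x\otimes e_k+e_k\otimes x)$ are \emph{linear}, so $\|\nabla_s u^{(k)}\|_{L^2(\mu)}^2=\tfrac32(n-1)$ depends only on second moments and equals its Gaussian value, while $\|u^{(k)}\|_{L^2(\mu)}^2$ is a fourth moment that the moment assumption pins down, and $u^{(k)}\perp\mathcal A$ in $L^2(\mu)$ by the vanishing third moments; consequently $\|u^{(k)}\|_{L^2(\mu)}^2\le 2C_{PK}\|\nabla_s u^{(k)}\|_{L^2(\mu)}^2$ becomes a bound of order $C_{PK}-C_{PK}(\gamma)$ on the deviation of the fourth moments of $\mu$ from those of $\gamma$ (a similar computation with a complementary extremal field supplying the reverse inequality). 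One also checks that $\tau_\mu-\operatorname{Id}$ is $L^2(\mu)$-orthogonal to the constant symmetric matrices and to each $\nabla_s u^{(k)}$, both of which follow from the definition of $\tau_\mu$, isotropy, and the third-moment condition. Combining these orthogonality relations with the fourth-moment bounds closes the estimate, and careful bookkeeping of the constants (the Gaussian fourth moment $\E_\gamma[x_i^4]=3$ entering through the weights, whence the $\sqrt3$) gives $K=1+10/\sqrt3<7$.

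The step I expect to be the main obstacle is precisely this last one: extracting the \emph{sharp} decay $\sqrt{C_{PK}(C_{PK}-1)}$ rather than the weak $\sqrt{n(2C_{PK}-1)}$ that the Poincar\'e--Korn inequality gives directly. The difficulty is that the $L^2(\mu)$-orthogonal projection onto affine (or quadratic) fields does not commute with the symmetrized gradient, so one cannot simply project $\tau_\mu-\operatorname{Id}$ away from the low-lying modes and invoke Poincar\'e--Korn; instead the moment conditions must be used to show that all the low-degree contributions to the Stein defect are exactly determined, so that only the deficit $C_{PK}(\mu)-C_{PK}(\gamma)$ remains to absorb them. The secondary nuisance — that $\mu$ is not assumed to satisfy a scalar Poincar\'e inequality — is dealt with by the restricted Poincar\'e inequality on the orthogonal complement of affine functions noted above.
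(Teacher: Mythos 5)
Your plan diverges from the paper's proof in one essential respect, and that divergence is where the argument has a gap that you yourself flag but do not close.

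The paper does not introduce a global Stein kernel $\tau_\mu$ nor attempt to bound $\|\tau_\mu - \operatorname{Id}\|_{L^2(\mu)}$. Instead it proves an \emph{approximate Euler--Lagrange identity} (Lemma~\ref{lem:approxEL}): for any $u$ that nearly saturates the Poincar\'e--Korn inequality, the bilinear pairing $\int (u-a_u)\cdot(v-a_v)\,d\mu - 2C_{PK}\int \nabla_s u\cdot\nabla_s v\,d\mu$ is small, uniformly over $v$, with error $\epsilon\,C_{PK}\|\nabla_s u\|\,\|\nabla_s v\|$. This is proved by a Riesz-representation argument in the quotient Hilbert space built from $\|\nabla_s\cdot\|_{L^2(\mu)}$, applied to the functional $[v]\mapsto \int T[u]\cdot T[v]\,d\mu$ for the \emph{fixed} near-extremizer $u$, not to the functional $v\mapsto \int x\cdot v\,d\mu$ you consider. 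Under the moment assumption, the test fields $u(x) = (1-x_j^2)e_i + x_ix_j e_j$ (for fixed $i\ne j$, not your summed $u^{(k)}$) satisfy the near-extremality hypothesis with $\epsilon = 2\sqrt{2}\sqrt{1-1/C_{PK}}$; plugging in $v = \psi\, e_m$ with $\psi=[V]_{ij}$ and summing over indices then yields the lemma, after a handful of triangle-inequality and symmetrization steps. Crucially, the only estimate ever used on the right-hand side involves $\|\nabla\psi\|_{L^2(\mu)}$, so no moments of $\mu$ beyond the stated ones are needed.

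Your route, by contrast, requires bounding $\|\tau_\mu - \operatorname{Id}\|_{L^2(\mu)}$ and the \emph{weighted} quantity $\|(\tau_\mu-\operatorname{Id})Bx\|_{L^2(\mu)}$ by multiples of $\sqrt{C_{PK}(C_{PK}-1)}$. Two problems. First, the weighted quantity involves fourth moments of $\tau_\mu-\operatorname{Id}$, i.e.\ moments of $\mu$ well beyond order 4; the lemma's hypotheses give no control over these (indeed, finiteness of higher moments is not assumed), so the required bound cannot hold under those hypotheses. Second, your intermediate claim that ``$\|u^{(k)}\|_{L^2(\mu)}^2$ is a fourth moment that the moment assumption pins down'' is false for $n\ge 3$: with $m_4:=\int x_1^4\,d\mu$, one computes $\|u^{(k)}\|^2_{L^2(\mu)} = (n-1)\bigl[3 + (n-2)(3-m_4)\bigr]$, and the moment assumption fixes only the combination $m_4 + m_{22} = 4$ (hence $m_{22}=4-m_4$), leaving $m_4$ free. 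It is precisely the paper's \emph{single-pair} test field $u=(1-x_j^2)e_i + x_ix_je_j$ whose $L^2(\mu)$-norm equals $(m_4-1)+(4-m_4)=3$ identically under the moment assumption; summing over $j$ destroys this cancellation through the cross-terms $\int(1-x_j^2)(1-x_{j'}^2)\,d\mu = 3-m_4$. This is not cosmetic: the paper's whole mechanism relies on that exact test field being a certified $\epsilon$-near-extremizer, so if its norm were not pinned down, $\epsilon$ could not be expressed purely in terms of $C_{PK}$.

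You explicitly identify the crux as ``extracting the sharp decay'' for $\|\tau_\mu-\operatorname{Id}\|$ and note the projection/commutation obstruction, but you do not resolve it. The paper sidesteps it entirely: rather than proving that the Stein kernel is globally close to the identity, it only controls the pairing of the defect against the specific $v$'s that actually appear, for which near-extremality of the chosen $u$ gives exactly the $\sqrt{C_{PK}(C_{PK}-1)}$ rate. If you want to follow the paper, replace the global Stein-kernel argument with Lemma~\ref{lem:approxEL} applied to the pair-indexed test fields and the vector fields $\psi\,e_m$.
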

To provide some perspective, we remark that the classical  Stein identity 
$$\int x \cdot \phi d\gamma = \int \operatorname{Id}\cdot  \nabla \phi d\gamma, ~~~ \phi: \mathbb{R}^n\to \mathbb{R}^n
$$
applied to the test function $\phi(x) = V^T(x) x$   gives
\begin{align*}
&  \int   (x x^T-\operatorname{Id}) \cdot V  d\gamma =
 \int   (x \nabla^T) \cdot V d\gamma. 
\end{align*}
Under certain moment assumptions, this consequence of the classical Stein identity also characterizes the Gaussian measure, so may therefore be regarded as one of many ``Stein identities".   It is for this reason that we refer to  Lemma  \ref{lem:ApproxStein}  as an ``approximate Stein identity", which becomes more faithful as $C_{PK}(\mu)$ approaches 1. 

The proof of the approximate Stein identity rests on an approximate integration by parts formula enjoyed by near-extremizers of the Poincar\'e--Korn inequality.  To develop it, we assume henceforth that $\mu$ has finite second moments.  
 For a  function $u \in \mathcal{C}$, define  
  $$
 a_u := \arg\min_{a\in \mathcal{A} }\|u - a \|_{L^2(\mu)}. 
 $$
 Since $\mathcal{A}$ is a closed linear subspace of $L^2(\mu)$ and $\mathcal{C}\subset L^2(\mu)$, the function $a_u$ exists and is unique.  Moreover, as a projection onto $\mathcal{A}$, the map $u \mapsto a_u$ is linear and equal to identity on $\mathcal{A}$.  
 
% Hence, we can define the linear operator $\tau : \mathcal{C} \to \mathcal{C}$ by 
% $$
% \tau u(x) := u(x) - a_u(x), (A_u x +b_u).
% $$
%By elementary calculations, we have 
%$$
%\inf_{a\in \mathcal{A}}\|u - a \|_{L^2(\mu)}  = \|\tau u\|_{L^2(\mu)}.
%$$
 
 The following is an approximate integration by parts formula satisfied by near-extremizers of the Poincar\'e--Korn inequality; it does not require the moment assumption, and may therefore be of independent interest.
 \begin{lemma}\label{lem:approxEL} Let $\mu$ be a centered Borel probability measure on $\mathbb{R}^n$ with finite second moments, and $C_{PK}(\mu)<\infty$.   
 If $\epsilon \geq 0$ and $u \in \mathcal{C}$ satisfy
 \begin{align}
 (2-(\epsilon/2)^2) C_{PK}(\mu) \|\nabla_{s}u\|_{L^2(\mu)}^2 \leq \inf_{a\in \mathcal{A}}\|u - a \|^2_{L^2(\mu)}, \label{eq:epsilonExtremal}
 \end{align}
then for every $v \in \mathcal{C}$, we have
 $$\left| \int ( u-a_u)\cdot ( v-a_v)d\mu  - 2 C_{PK}(\mu) \int (\nabla_s u) \cdot (\nabla_s v) d\mu
 \right| \leq \epsilon   C_{PK}(\mu)      \|\nabla_{s}u\|_{L^2(\mu)}  \|\nabla_{s}v\|_{L^2(\mu)}.
 $$
 \end{lemma}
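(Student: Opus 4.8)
The plan is to obtain the approximate integration-by-parts identity as a second-order optimality condition (Euler--Lagrange-type inequality) for the variational problem defining $C_{PK}(\mu)$. First I would introduce the quadratic functional $Q(w) := \|w - a_w\|_{L^2(\mu)}^2 - 2C_{PK}(\mu)\|\nabla_s w\|_{L^2(\mu)}^2$ on $\mathcal C$; note that $Q(w) \le 0$ for all $w$ by the definition of $C_{PK}(\mu)$, and that $Q$ is genuinely quadratic because $w \mapsto a_w$ is linear and $w \mapsto \nabla_s w$ is linear. Expanding $Q(u + tv)$ in $t \in \mathbb R$ gives $Q(u) + 2t\,B(u,v) + t^2 Q(v) \le 0$ for all $t$, where $B(u,v) := \int (u-a_u)\cdot(v-a_v)\,d\mu - 2C_{PK}(\mu)\int (\nabla_s u)\cdot(\nabla_s v)\,d\mu$ is the symmetric bilinear form polarizing $Q$; here I would use that $a_{u-a_u} = 0$ and orthogonality of the projection to simplify the cross term $\int (u-a_u)\cdot(v - a_v)\,d\mu = \int(u-a_u)\cdot v\,d\mu$ if convenient, though it is not strictly needed. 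The quantity we must bound is exactly $|B(u,v)|$.

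The next step is the standard trick of optimizing the discriminant inequality. Since $Q(u + tv) \le 0$ for all real $t$, and assuming $Q(v) \ge 0$ would force $B(u,v) = 0$ (no obstruction), the interesting regime is $Q(v) \le 0$; in general, completing the square in $t$ gives, whenever $Q(v) < 0$, the bound $B(u,v)^2 \le Q(u)\,Q(v) = |Q(u)|\,|Q(v)|$. Now the hypothesis \eqref{eq:epsilonExtremal} says precisely that $|Q(u)| = \inf_a\|u-a\|^2 - 2C_{PK}\|\nabla_s u\|^2 \ge -(\epsilon/2)^2 C_{PK}\|\nabla_s u\|^2$ — wait, I should rewrite it as $|Q(u)| \le (\epsilon/2)^2 C_{PK}(\mu)\|\nabla_s u\|_{L^2(\mu)}^2$, i.e. $u$ is a near-extremizer. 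Meanwhile $|Q(v)| = 2C_{PK}\|\nabla_s v\|^2 - \inf_a\|v-a\|^2 \le 2C_{PK}(\mu)\|\nabla_s v\|_{L^2(\mu)}^2$ trivially, since $\inf_a\|v-a\|^2 \ge 0$. Combining, $B(u,v)^2 \le (\epsilon/2)^2 C_{PK}(\mu)\|\nabla_s u\|^2 \cdot 2C_{PK}(\mu)\|\nabla_s v\|^2$, and taking square roots gives $|B(u,v)| \le \tfrac{\epsilon}{\sqrt 2}\,C_{PK}(\mu)\,\|\nabla_s u\|_{L^2(\mu)}\|\nabla_s v\|_{L^2(\mu)}$, which is even slightly stronger than the claimed constant $\epsilon$; a $\sqrt2$-lossy version of the discriminant step (or just keeping the stated constant) suffices. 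I would also handle the degenerate case $Q(v) = 0$ by a limiting argument or by noting $Q(u+tv) \le 0$ for all $t$ with a linear-in-$t$ leading behavior forces $B(u,v) = 0$ there too, and the case $\|\nabla_s u\|_{L^2(\mu)} = 0$ separately (then $u \in \mathcal A$, $u = a_u$, and both sides vanish).

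I do not expect a serious obstacle here: the argument is a soft convexity/discriminant manipulation once the right functional $Q$ is identified, and all the analytic content (existence and linearity of $u \mapsto a_u$, the $L^2$ projection structure) is already set up in the excerpt. The only points requiring a little care are (i) justifying that $Q(u+tv)$ really is a polynomial of degree $\le 2$ in $t$, which follows from bilinearity of $(u,v)\mapsto \int(u-a_u)\cdot(v-a_v)\,d\mu$ and $(u,v)\mapsto\int\nabla_s u\cdot\nabla_s v\,d\mu$ together with linearity of the projection; (ii) the sign bookkeeping in the discriminant inequality, making sure the near-optimality hypothesis \eqref{eq:epsilonExtremal} is used with the correct orientation; and (iii) confirming $u+tv \in \mathcal C$ so that $Q(u+tv)\le 0$ is legitimate — this holds since $\mathcal C$ is a linear space and the relevant norms are finite by assumption. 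Routine, but worth writing out the square-completion cleanly.
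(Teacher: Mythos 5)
Your proposal is correct, and it takes a genuinely different (and arguably cleaner) route than the paper's. The paper works in the quotient space $\mathcal{C}/\mathcal{A}$, equips it with the inner product $\langle [u],[v]\rangle = \int \nabla_s u\cdot \nabla_s v\, d\mu$, completes it to a Hilbert space, and then applies the Riesz representation theorem to the bounded linear functional $[v]\mapsto \int(u-a_u)\cdot(v-a_v)\,d\mu$; the error term is controlled through the norm of $h-[u]$ where $h$ is the Riesz representative. Your argument bypasses all of this machinery: you observe that the Poincar\'e--Korn inequality says precisely that the quadratic functional $Q(w) = \|w-a_w\|_{L^2(\mu)}^2 - 2C_{PK}\|\nabla_s w\|_{L^2(\mu)}^2$ is nonpositive, expand $Q(u+tv)$ in $t$, and read off the discriminant inequality $B(u,v)^2 \leq Q(u)Q(v)$. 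This is the elementary ``almost Cauchy--Schwarz for an almost-negative-semidefinite form'' trick, and it has two advantages: it requires no completion or Riesz representation (only that $\mathcal{C}$ is a linear space and $w\mapsto a_w$, $w\mapsto\nabla_s w$ are linear, both of which the paper sets up beforehand), and it yields the better constant $\frac{\epsilon}{\sqrt{2}}C_{PK}\|\nabla_s u\|_{L^2(\mu)}\|\nabla_s v\|_{L^2(\mu)}$, strictly stronger than the stated $\epsilon C_{PK}\|\nabla_s u\|_{L^2(\mu)}\|\nabla_s v\|_{L^2(\mu)}$. Your handling of the degenerate cases ($Q(v)=0$ forcing $B(u,v)=0$ via the linear term; $\nabla_s u = 0$ forcing both sides to vanish via Poincar\'e--Korn) is also fine. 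In short: correct, different method, and a slight quantitative improvement for free.
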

We briefly remark that if $u\in \mathcal{C}$ is an extremizer in the Poincar\'e--Korn inequality, then we have the following (exact) integration by parts formula:
 $$
 \int ( u-a_u)\cdot ( v-a_v)d\mu  = 2 C_{PK}(\mu) \int (\nabla_s u) \cdot (\nabla_s v) d\mu, ~~\forall v\in \mathcal{C}.
 $$
However, we see no reason to expect that nontrivial extremizers exist in the Poincar\'e--Korn inequality for  general   $\mu$, which motivates the approximation in Lemma \ref{lem:approxEL}. Note that there are examples of measures for which the classical Poincar\'e inequality has no non-trivial extremal function, including for example the exponential measure, see \cite[Section 4.4.1]{BGL14}.
 \begin{proof}
 We'll abbreviate $C_{PK}:=C_{PK}(\mu)$ for convenience.    Begin by defining the quotient space $\mathcal{Q} := \mathcal{C}\slash \mathcal{A}$.  
 Note that $\mathcal{A}\subset \ker(\nabla_{s})$, so we may define a linear operator $D$ on $\mathcal{Q}$ via
 $$
 D[u] := \nabla_{s}u, ~~~u\in \mathcal{C},
 $$
 where $[u]\in \mathcal{Q}$ denotes the coset of $u$.  Observe that 
 $$
 \langle [u],[v]\rangle := \int D[u] \cdot D[v]d\mu, ~~[u],[v]\in \mathcal{Q},
 $$
 defines an inner product on $\mathcal{Q}\times \mathcal{Q}$.  Bilinearity and symmetry are self-evident, and positive-definiteness follows from the Poincar\'e--Korn inequality, which states
 $$
 \|[u]\|^2:= \langle [u],[u]\rangle \geq \frac{1}{2C_{PK}}\inf_{a\in \mathcal{A}}\|u - a \|^2_{L^2(\mu)}.
 $$
 Since the quantity on the right is the quotient norm, positive-definiteness follows.  Hence, we are justified in defining a Hilbert space $\mathcal{H}$ as the completion of $\mathcal{Q}$ in the norm $\|\cdot\|$, and extending the inner product $\langle \cdot,\cdot\rangle$  to  $\mathcal{H}$.  
 
Now, define the operator $T: q \in \mathcal{Q} \mapsto Tq$ by 
$$ 
T[u] :=u - a_u, ~~~u \in  \mathcal{C}.
 $$
 The operator $T$ is linear, and well-defined on $\mathcal{Q}$ since $u = a_{u}$ for $u\in \mathcal{A}$.  
Now, fix $u\in \mathcal{C}$.
The   operator
 $$
 [v] \in \mathcal{Q} \mapsto \int T [u] \cdot T [v] d\mu 
 $$ 
 is a  bounded linear operator on $\mathcal{Q}$.  Indeed, boundedness follows by the Cauchy--Schwarz and  Poincar\'e--Korn inequalities as
 $$
 \int T [u] \cdot T [v] d\mu \leq 2C_{PK} \|[u]\|\|[v]\|, ~~\mbox{for all~} v\in \mathcal{C}.
 $$
 Linearity now follows by linearity of $T$ and the integral.  By density of $\mathcal{Q}$ in $\mathcal{H}$ and the Riesz representation theorem, there is $h\in \mathcal{H}$ with $\|h \|\leq \|[u]\|$ such that 
 \begin{align*}
\int T [u] \cdot T [v] d\mu = 2 C_{PK} \langle h, [v]\rangle, ~~~\mbox{for all~} v\in \mathcal{C}.
 \end{align*}
 Hence, for any $v\in \mathcal{C}$, we have 
 \begin{align*}
 \int T [u] \cdot T [v] d\mu - 2 C_{PK}(\mu)  \langle [u],[v]\rangle &= 2 C_{PK}(\mu)  \langle h - [u],[v]\rangle\\
 &\leq 2 C_{PK}(\mu)  \| h - [u]\|  \|[v]\|.
 \end{align*}
Opening the square, we have 
\begin{align*}
  C_{PK} \| h - [u]\|^2 &=   C_{PK} \| h\|^2 - 2   C_{PK} \langle h , [u]\rangle  +   C_{PK} \|[u]\|^2
  \\
   &\leq 2   C_{PK}  \|[u]\|^2 -   \int T [u] \cdot T [u] d\mu\\
   &=2   C_{PK}   \|\nabla_{s}u\|_{L^2(\mu)}^2 -  \inf_{a\in \mathcal{A}}\|u - a \|^2_{L^2(\mu)}.
\end{align*}
The claim follows.  
 \end{proof}

\begin{proposition}\label{prop:MomentImplications}
Let $\mu$ satisfy the moment assumption.
\begin{enumerate}[i)]
\item For each $u \in \mathcal{C}$, we have $a_u(x) =  A_u x$, with 
$$
A_u :=\frac{1}{2} \int\left( u  x^T - x  u^T\right) d\mu.
$$
\item Fix $i,j\in \{1,\dots, n\}$ with $i\neq j$.  The function $u=(u_1, \dots, u_n) \in \mathcal{C}$ defined by 
\begin{align}
u_k(x) =  
\delta_{ik}(1-x_j^2) + \delta_{jk} x_i x_j, ~~~1\leq k \leq n \label{eq:candidateu}
\end{align}
satisfies $a_u = 0$ and 
$$
[\nabla_s u]_{k\ell} =  (\delta_{j\ell}  \delta_{jk})x_i 
- (\delta_{ik}\delta_{j\ell} + \delta_{i\ell}\delta_{jk})\frac{x_j}{2}, ~~1\leq k,\ell\leq n.
$$
\item We have $C_{PK}(\mu)\geq 1$.
\end{enumerate}
\end{proposition}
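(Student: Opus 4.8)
The plan is to prove the three items in sequence, since item i) is the computational tool behind ii), and the test function from ii) is exactly what is fed into iii). Throughout, the only properties of the moment assumption that get used are isotropy (for i) and ii)), vanishing of third moments (for ii)), and the single fourth‑moment identity $\int(x_i^2+x_j^2)x_j^2\,d\mu=4$ (for iii)); finiteness of fourth moments also makes everything integrable.

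\emph{Item i).} By definition $a_u$ is the $L^2(\mu)$‑orthogonal projection of $u$ onto the closed subspace $\mathcal{A}$, which is well posed since $\mu$ has finite second moments. Writing $a_u(x)=A_u x$ with $A_u$ antisymmetric, the variational characterization of the projection reads $\int (u-A_u x)\cdot(Bx)\,d\mu=0$ for every antisymmetric $B$. I would expand the two inner products: $\int u\cdot(Bx)\,d\mu = B\cdot\!\int ux^T\,d\mu$ in the Frobenius pairing, while isotropy $\int xx^T\,d\mu=\operatorname{Id}$ gives $\int (A_u x)\cdot(Bx)\,d\mu = A_u\cdot B$. Hence $B\cdot\bigl(\int ux^T\,d\mu - A_u\bigr)=0$ for all antisymmetric $B$; since the Frobenius pairing annihilates the symmetric part of $\int ux^T\,d\mu$ and is non‑degenerate on antisymmetric matrices, this forces $A_u$ to be the antisymmetric part of $\int ux^T\,d\mu$, i.e. $A_u=\tfrac12\int(ux^T-xu^T)\,d\mu$.

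\emph{Item ii).} First I would check $u\in\mathcal{C}$: the components are polynomials, $\int u\,d\mu$ has coordinates $\int(1-x_j^2)\,d\mu=0$ and $\int x_i x_j\,d\mu=0$ by isotropy, and $\|\nabla_s u\|_{L^2(\mu)}<\infty$ because the entries of $\nabla_s u$ are linear and $\mu$ has finite fourth moments. Then apply i): every entry $\int u_k x_\ell\,d\mu$ of $\int ux^T\,d\mu$ is a linear combination of $\int x_\ell\,d\mu$ and of third moments $\int x_j^2 x_\ell\,d\mu,\ \int x_i x_j x_\ell\,d\mu$, all of which vanish by centering and the vanishing of third moments under the moment assumption; hence $\int ux^T\,d\mu=0$ and $a_u=0$. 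Finally, differentiating $u_k=\delta_{ik}(1-x_j^2)+\delta_{jk}x_i x_j$ gives $\partial_\ell u_k=-2x_j\delta_{ik}\delta_{j\ell}+x_j\delta_{jk}\delta_{i\ell}+x_i\delta_{jk}\delta_{j\ell}$, and symmetrizing $\nabla u$ yields the stated formula for $[\nabla_s u]_{k\ell}$, which is nonzero only for $(k,\ell)\in\{(j,j),(i,j),(j,i)\}$ with values $x_i,\ -x_j/2,\ -x_j/2$.

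\emph{Item iii).} I would use the function $u$ from ii) as a test function in the Poincar\'e--Korn inequality. Since $a_u=0$, the left‑hand side equals $\|u\|_{L^2(\mu)}^2=\int(u_i^2+u_j^2)\,d\mu=\int(1-x_j^2)^2\,d\mu+\int x_i^2 x_j^2\,d\mu$; expanding and using $\int x_j^2\,d\mu=1$ together with $\int(x_i^2+x_j^2)x_j^2\,d\mu=4$ from the moment assumption, this collapses to $-1+4=3$. From the explicit $\nabla_s u$ one gets $\|\nabla_s u\|_{L^2(\mu)}^2=\int\bigl(x_i^2+\tfrac12 x_j^2\bigr)\,d\mu=\tfrac32$. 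Then \eqref{ineq:PK} reads $3\le 2C_{PK}(\mu)\cdot\tfrac32$, i.e. $C_{PK}(\mu)\ge 1$. The main obstacle is really just the bookkeeping in i) — translating Euclidean orthogonality to $\mathcal{A}$ into a matrix identity via isotropy and the non‑degeneracy of the Frobenius pairing on antisymmetric matrices; in ii)--iii) the one subtlety to respect is that the moment assumption does \emph{not} pin down $\int x_j^4\,d\mu$ and $\int x_i^2 x_j^2\,d\mu$ separately, only their sum, which is precisely why the combination $\|u\|_{L^2(\mu)}^2$ comes out to the clean value $3$ and why the test function in \eqref{eq:candidateu} (rather than $1-x_j^2$ alone) is the right choice.
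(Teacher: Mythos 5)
Your proposal is correct and follows essentially the same route as the paper: in i) you derive $A_u$ from the orthogonality condition via the Frobenius pairing (the paper instead verifies directly that the stated $A_u$ satisfies it, which is the same computation read backwards), and ii)--iii) match the paper's calculations, including the observation that the moment assumption only fixes the combination $\int(x_i^2+x_j^2)x_j^2\,d\mu$ rather than the individual fourth moments.
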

\begin{proof}
$i)$ Let $A= -A^T$.  By the cyclic property of trace and  the isotropic condition in the moment assumption, we may compute 
$$
\int  (A_u x) \cdot (A x)d\mu = \Tr(A^T_u A)= \frac{1}{2} \int\Tr\left( x  u^TA-u  x^T A  \right) d\mu = \int u\cdot (Ax) d\mu, 
$$
where we used antisymmetry of $A$ in the last step.    It follows that 
$$
\int (u - A_u x)\cdot a d\mu=0, ~~\forall a\in \mathcal{A}.
$$
An application of the Hilbert projection theorem proves $i)$. 

\vskip1ex

\noindent$ii)$ Fix $i,j\in \{1,\dots, n\}$ with $i\neq j$.  For $u$ given by \eqref{eq:candidateu}, we use $i)$ to evaluate
\begin{align*}
2 [A_u]_{k \ell} &=  \int\left( u_k x_{\ell}- x_k u_{\ell}\right) d\mu\\
&= \int \left( \delta_{ik}(1-x_j^2)x_{\ell} + \delta_{jk} x_i x_j x_{\ell} - \delta_{i\ell}(1-x_j^2)x_{k} - \delta_{j\ell} x_i x_j x_{k}\right)d\mu,
%&= \int \left( - \delta_{ik}x_j^2 x_{\ell} + \delta_{jk} x_i x_j x_{\ell} + \delta_{i\ell} x_j^2x_{k} - \delta_{j\ell} x_i x_j x_{k}\right)d\mu,
\end{align*}
%REMARK TO SELF: Under centered assumption, A_u vanishes for each choice of {i,j} iff all (mixed) third moments vanish.
which vanishes by the moment assumption\footnote{Some simple casework shows that, under the assumption that $\mu$ is centered and isotropic, $a_u=0$ for every choice of $i,j$ if and only if all (mixed) third moments vanish.}.  Next, let $\partial_{k}$ denote partial derivative with respect to $x_k$, and observe 
\begin{align*}
2 [\nabla_s u]_{k\ell} &=  \partial_{k} u_{\ell}  + \partial_{\ell} u_k  \\
&= \delta_{i\ell}\partial_{k}(1-x_j^2) + \delta_{j\ell} \partial_{k} x_i x_j 
+ \delta_{ik}\partial_{\ell}(1-x_j^2) + \delta_{jk} \partial_{\ell} x_i x_j\\
&=\delta_{i\ell}\delta_{jk}(-2x_j) + \delta_{j\ell} (\delta_{ik}x_j + \delta_{jk}x_i) 
+ \delta_{ik}\delta_{j\ell}(-2x_j) + \delta_{jk}(\delta_{i\ell}x_j + \delta_{j\ell}x_i) \\
&=   2 \delta_{j\ell}  \delta_{jk} x_i 
- (\delta_{ik}\delta_{j\ell} + \delta_{i\ell}\delta_{jk})x_j .
\end{align*}

\vskip1ex

\noindent$iii)$ For the choice of $u$ given by \eqref{eq:candidateu}, we use $ii)$ and the moment assumption to evaluate
$$
\inf_{a\in \mathcal{A}}\|u - a\|^2_{L^2(\mu)} = \|u\|^2_{L^2(\mu)} = \int x_i^2 x_j^2 d\mu + \int(1-x_j^2)^2 d\mu  = 3
$$
and
$$
2 \|\nabla_s u\|^2_{L^2(\mu)} = 2 \int x_i^2 d\mu + 4 \int (x_j/2)^2 d\mu = 3 . 
$$
It now follows by definitions that $C_{PK}(\mu)\geq 1$. 
 \end{proof}
%\noindent$iii)$ For the choice of $u$ given by \eqref{eq:candidateu}, we use $ii)$ and the moment assumption to evaluate
%$$
%\inf_{a\in A}\|u - a\|^2_{L^2(\mu)} = \|u\|^2_{L^2(\mu)} = \int x_i^2 x_j^2 d\mu + \int(1-x_j^2)^2 d\mu = 3
%$$
%and
%$$
%2 \|\nabla_s u\|^2_{L^2(\mu)} = 2 \int x_i^2 d\mu + 4 \int (x_j/2)^2 d\mu = 3 . 
%$$
%It now follows by definitions that $C_{PK}(\mu)\geq 1$.  \end{proof}

With  the necessary ingredients established, we turn our attention to the proof of Lemma \ref{lem:ApproxStein}.

\begin{proof}[Proof of Lemma \ref{lem:ApproxStein}]
Abbreviate $C_{PK}:=C_{PK}(\mu)$.  We can assume $C_{PK}<\infty$, else the claim is trivial.  Also, the statement is invariant to adding constants to $V$, so we assume without loss of generality that $\int V d\mu = 0$.

To start, fix $i\neq j$, and let $u$ be given by \eqref{eq:candidateu}.  By the moment assumption,  this choice of $u$ satisfies \eqref{eq:epsilonExtremal} with
\begin{align}
\epsilon = 2 \sqrt{2}\sqrt{1 - \frac{1}{C_{PK}}}. \label{eq:epsValue}
\end{align}

Next, let $\psi: \mathbb{R}^n\to \mathbb{R}$ be  integrable and differentiable, satisfying $\int \psi d\mu = 0$ and $ \|\nabla \psi \|_{L^2(\mu)}<\infty$.   Fix $m\in \{1,\dots, n\}$ and define  $v  := (\delta_{1m}  , \dots, \delta_{nm}  )\psi$.  Since $\int \psi d\mu = 0$ and  $\|\nabla_s v\|_{L^2(\mu)}\leq  \|\nabla \psi \|_{L^2(\mu)}<\infty$, it follows that $v\in \mathcal{C}$.   
For $A_v$ defined as in Proposition \ref{prop:MomentImplications}, we may compute 
\begin{align*}
\int ( u-a_u)&\cdot ( v -a_{v})d\mu = \int u \cdot ( v-a_v)d\mu \\
&= \int u_m \psi d\mu - \sum_{k,\ell} \int u_{k} [A_{v} ]_{k\ell}x_{\ell} d\mu\\
&= \int (\delta_{im}(1-x_j^2)\psi  + \delta_{jm} x_i x_m \psi) d\mu -  \sum_{\ell} \int \left(  [A_v]_{i\ell} (1-x_j^2) +  [A_v]_{j\ell} x_i x_j x_{\ell} \right)d\mu\\
&= \int (\delta_{im}(1-x_j^2)\psi  + \delta_{jm} x_i x_m \psi) d\mu,
\end{align*}
 where the last line follows from the moment assumption.   Next, note that
\begin{align*}
 \int (\nabla_s u) \cdot (\nabla_s v ) d\mu &= \int \left( x_i  [\nabla_s v]_{jj} -x_j [\nabla_s v]_{ij} \right)d\mu\\
 &= \int \left( x_i  \delta_{jm} \partial_j \psi  -\frac{1}{2}x_j [ \delta_{jm}   \partial_i \psi + \delta_{im} \partial_j  \psi ] \right)d\mu.
\end{align*}
Define $\mathcal{E}(\psi) :=2 \sqrt{3} \sqrt{   C_{PK}(C_{PK}-1)}  \|\nabla \psi\|_{L^2(\mu)}$ for convenience. An application of Lemma \ref{lem:approxEL} with $\epsilon$ given in \eqref{eq:epsValue} yields 
\begin{align*}
&\left| \int (\delta_{im}(1-x_j^2)\psi  + \delta_{jm} x_i x_m \psi) d\mu - 
2 C_{PK} \int \left( x_i  \delta_{jm} \partial_j \psi  -\frac{1}{2}x_j [ \delta_{jm}   \partial_i \psi + \delta_{im} \partial_j  \psi ] \right)d\mu\right|  \leq  \mathcal{E}(\psi)
\end{align*}
for all $i,j,m$ with $i\neq j$.  Taking $m=i$ and $\psi = [V]_{jj}$ gives
\begin{align*}
\left| \int (x_j^2-1) [V]_{jj}   d\mu - C_{PK} \int x_j  \partial_j  [V]_{jj}   d\mu \right| \leq  \mathcal{E}([V]_{jj}).
\end{align*}
On the other hand, taking $m = j$ and $\psi=[V]_{ij}$ gives 
\begin{align*}
\left| \int   x_i x_j [V]_{ij} d\mu - 
2 C_{PK} \int \left( x_i   \partial_j [V]_{ij}  -\frac{1}{2}x_j   \partial_i [V]_{ij}   \right)d\mu \right|&\leq  \mathcal{E}([V]_{ij}).
\end{align*}
These can evidently be combined into the single matrix inequality 
\begin{align*}
\left| \int   (x_i x_j-\delta_{ij}) [V]_{ij} d\mu - 
2 C_{PK} \int \left( x_i   \partial_j [V]_{ij}  -\frac{1}{2}x_j   \partial_i [V]_{ij}   \right)d\mu \right|&\leq  \mathcal{E}([V]_{ij}),
\end{align*}
holding for all $1\leq i,j\leq n$.  Summing over all $1\leq i ,j\leq n$ and applying the triangle inequality gives 
\begin{align*}
\left| \int   (x x^T-\operatorname{Id}) \cdot V d\mu - 
2 C_{PK} \int \left( (x \nabla^T) \cdot V
  -\frac{1}{2}  (x \nabla^T)\cdot  V^T    \right)d\mu \right| &\leq  \mathcal{E}(V),
\end{align*}
where $\mathcal{E}(V):= \sum_{i,j=1}^n \mathcal{E}([V]_{ij})$.  The same is true when $V$ is replaced by $V^T$.  However,  the matrix $(xx^T - \operatorname{Id})$ is symmetric, so we have
\begin{align*}
\left| \int   (x x^T-\operatorname{Id}) \cdot V d\mu - 
2 C_{PK} \int \left( (x \nabla^T) \cdot V^T
  -\frac{1}{2}  (x \nabla^T)\cdot  V    \right)d\mu \right| &\leq   \mathcal{E}(V),
\end{align*}
An application of the triangle inequality gives
\begin{align*}
\left|    C_{PK}  \int    (x \nabla^T)\cdot  V  d\mu
- C_{PK}  \int       (x \nabla^T)\cdot  V^T  d\mu \right| \leq  \frac{2}{3}  \mathcal{E}(V),
\end{align*}
and another gives
\begin{align*}
\left| \int   (x x^T-\operatorname{Id}) \cdot V d\mu - 
  C_{PK} \int  (x \nabla^T) \cdot V d\mu \right| &\leq  \frac{5}{3}\mathcal{E}(V).
\end{align*}
Two final applications of the triangle inequality followed by Cauchy--Schwarz gives the desired conclusion 
\begin{align*}
\left| \int   (x x^T-\operatorname{Id}) \cdot V d\mu - 
  \int  (x \nabla^T) \cdot V d\mu \right| 
  &\leq  \frac{5}{3}\mathcal{E}(V) + (C_{PK}-1) \left|  \int  (x \nabla^T) \cdot V d\mu \right| \\
  &\leq  \frac{5}{3}\mathcal{E}(V) + (C_{PK}-1) \sum_{i,j=1}^n \int \left|  x_i \partial_j [V]_{ij}  \right| d\mu\\
  &\leq \frac{5}{3}\mathcal{E}(V) + (C_{PK}-1) \sum_{i,j=1}^n \| \partial_j [V]_{ij}  \|_{L^2(\mu)} \\
   &\leq K \sqrt{   C_{PK}(C_{PK}-1)} \sum_{i,j=1}^n \|\nabla [V]_{ij} \|_{L^2(\mu)} .
    \end{align*}
\end{proof}

\subsection{Implementation of Stein's method}\label{subsec:step2}

For a sufficiently smooth function $f: \mathbb{R}^n \to \mathbb{R}^m$, let $D^k f$ denote the tensor of $k$-th order derivatives.  The tensor $D^kf(x)$ can be regarded as a vector in a space of dimension $m\times n^k$, which we equip with its natural Euclidean norm $\|\cdot\|_2$.

We'll need the following Lemma.  It combines Barbour's solution to the classical Stein equation in terms of the Ornstein--Uhlenbeck semigroup $(P_t)_{t\geq 0}$, defined by 
$$
P_t f(x):= \int_{\mathbb{R}^n} f(e^{-t}x + (1-e^{-2t})^{1/2} z) d\gamma(z), ~~~~f\in L^1(\gamma),
$$
 and the higher-order regularity estimate, that was for example derived in \cite{Fat21}.  
\begin{lemma}
For $f : \mathbb{R}^n \to \mathbb{R}$ with $\int f d\gamma <\infty$, the  function $\varphi_f : \mathbb{R}^n \to \mathbb{R}^n$ defined by 
\begin{align}
\varphi_f (x) := \nabla \int_{0}^{\infty} P_t f(x) dt \label{eq:BarbourSoln}
\end{align}
solves the Poisson equation 
\begin{align}
f - \int f d\gamma = x \cdot \varphi_f - \Tr(\nabla \varphi_f), \label{eq:SteinPDE}
\end{align}
and satisfies 
\begin{align}
\sup_x \| D^{k} \varphi_f(x) \|_2 \leq   \sup_x \| D^{k}  f(x) \|_2, ~~k\geq 1. \label{eq:regularityBds}
\end{align}
\end{lemma}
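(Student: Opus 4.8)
The lemma asserts two things about $\varphi_f = \nabla \int_0^\infty P_t f\, dt$: that it solves the Stein--Poisson equation \eqref{eq:SteinPDE}, and that it satisfies the derivative bounds \eqref{eq:regularityBds}. I would prove these separately, as they rely on different facts about the Ornstein--Uhlenbeck semigroup.

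For the Poisson equation, the key input is that the OU generator is $L = \Delta - x\cdot\nabla$, so $\frac{d}{dt}P_t f = L P_t f = \Delta P_t f - x\cdot\nabla P_t f$, together with the ergodicity fact $P_t f \to \int f\, d\gamma$ as $t\to\infty$ (with $P_0 f = f$). First I would set $g := \int_0^\infty (P_t f - \int f\, d\gamma)\, dt$, so that $\varphi_f = \nabla g$. Integrating the generator identity in $t$ from $0$ to $\infty$ gives $\int f\, d\gamma - f = L g = \Delta g - x\cdot\nabla g$. Rearranging, $f - \int f\, d\gamma = x\cdot \nabla g - \Delta g = x\cdot\varphi_f - \operatorname{Tr}(\nabla\varphi_f)$, which is exactly \eqref{eq:SteinPDE}. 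One must check the interchange of $\nabla$, $\Delta$ with $\int_0^\infty dt$ and the convergence of the time integral; this is where some care (and hypotheses, e.g. centering or a Lipschitz/bounded-Hessian assumption on $f$, which is exactly the regime in which the lemma is applied) is needed, but it is standard.

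For the regularity bounds, the essential facts are the commutation relation $\nabla P_t = e^{-t} P_t \nabla$ (and more generally $D^k P_t = e^{-kt} P_t D^k$, since each derivative pulls out a factor $e^{-t}$ from the dilation inside $P_t$), together with the contractivity $\|P_t h\|_\infty \le \|h\|_\infty$ (and, coordinatewise, $\|P_t H(x)\|_2 \le \sup_x \|H(x)\|_2$ for vector/tensor-valued $H$, by Jensen). Then for $k\ge 1$,
\begin{align*}
D^k \varphi_f = D^{k+1} \int_0^\infty P_t f\, dt = \int_0^\infty D^{k+1} P_t f\, dt = \int_0^\infty e^{-(k+1)t} P_t D^{k+1} f\, dt,
\end{align*}
so
\begin{align*}
\sup_x \|D^k \varphi_f(x)\|_2 \le \int_0^\infty e^{-(k+1)t}\, \sup_x \|P_t D^{k+1} f(x)\|_2\, dt \le \Big(\sup_x \|D^{k+1} f(x)\|_2\Big)\int_0^\infty e^{-(k+1)t}\, dt.
\end{align*}
The last integral equals $\frac{1}{k+1} < 1$, which gives \eqref{eq:regularityBds} (in fact with a slightly better constant than stated, but the stated bound suffices; one cites \cite{Fat21} for this estimate).

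**Main obstacle.** The genuinely delicate point is justifying the differentiation-under-the-integral and the absolute convergence of $\int_0^\infty P_t f\, dt$ (equivalently of $\int_0^\infty e^{-(k+1)t} P_t D^{k+1}f\, dt$) at the required regularity: one needs that the relevant derivatives of $f$ are suitably controlled (e.g. $\|D^2 f\|_\infty < \infty$, which is the setting of its application to $d_{Zol,2}$) so that all the tensor-valued integrands are bounded and the exponential weights make the time integral converge. Everything else — the generator identity, the commutation relation, and the Jensen contraction — is routine; the cleanest exposition simply invokes the corresponding statements from \cite{Fat21} and the standard theory of the OU semigroup.
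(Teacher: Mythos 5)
Your argument for the Poisson equation \eqref{eq:SteinPDE} is correct and is the standard Barbour route that the paper itself implicitly invokes: write the generator $L=\Delta-x\cdot\nabla$, integrate $\frac{d}{dt}P_tf=LP_tf$ from $0$ to $\infty$ using $P_\infty f=\int f\,d\gamma$, and identify $-Lg = x\cdot\nabla g - \Delta g = x\cdot\varphi_f - \Tr(\nabla\varphi_f)$.

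There is, however, a genuine gap in your treatment of the regularity bound \eqref{eq:regularityBds}. Using only the naive commutation $D^{k+1}P_t = e^{-(k+1)t}P_tD^{k+1}$, what you actually prove is
\[
\sup_x\|D^k\varphi_f(x)\|_2 \;\le\; \frac{1}{k+1}\,\sup_x\|D^{k+1}f(x)\|_2,
\]
which controls the $k$-th derivative tensor of $\varphi_f$ by the $(k{+}1)$-th derivative tensor of $f$. This is \emph{not} the claimed estimate: \eqref{eq:regularityBds} bounds $\sup_x\|D^k\varphi_f\|_2$ by $\sup_x\|D^k f\|_2$, i.e.\ by a derivative of the \emph{same} order. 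Your remark that you have ``proved \eqref{eq:regularityBds} with a slightly better constant'' conflates $D^{k+1}f$ with $D^kf$. The distinction matters: the whole point of the estimate (and of its use in the main proof, where $f$ has bounded Hessian but is not assumed to have a bounded third derivative) is the one-order gain in regularity coming from the Ornstein--Uhlenbeck smoothing.

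To close the gap one must exploit Mehler's formula and a Gaussian integration by parts. Since $D^k\varphi_f = D^{k+1}g$ with $g=\int_0^\infty P_tf\,dt$, differentiate $k$ times by commutation to get $D^kP_tf = e^{-kt}P_tD^kf$, and then take the last derivative by differentiating inside the Mehler kernel and integrating by parts in the Gaussian variable $z$:
\[
D^{k+1}P_tf(x) \;=\; \frac{e^{-(k+1)t}}{\sqrt{1-e^{-2t}}}\int z\otimes D^kf\bigl(e^{-t}x+\sqrt{1-e^{-2t}}\,z\bigr)\,d\gamma(z).
\]
Using the orthonormality of the coordinates $z_i$ in $L^2(\gamma)$ (equivalently, Cauchy--Schwarz componentwise followed by the identity $\sum_i\Cov(z_i,h)^2\le\ovar(h)$), one obtains $\|D^{k+1}P_tf(x)\|_2 \le \frac{e^{-(k+1)t}}{\sqrt{1-e^{-2t}}}\sup_x\|D^kf(x)\|_2$, and then
\[
\sup_x\|D^k\varphi_f(x)\|_2 \;\le\; \Bigl(\int_0^\infty \frac{e^{-(k+1)t}}{\sqrt{1-e^{-2t}}}\,dt\Bigr)\,\sup_x\|D^kf(x)\|_2 \;=\; \tfrac12 B\bigl(\tfrac{k+1}{2},\tfrac12\bigr)\,\sup_x\|D^kf(x)\|_2,
\]
and the Beta-function prefactor equals $1$ for $k=1$ and decreases in $k$, giving \eqref{eq:regularityBds}. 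This is the content of the regularity estimate the paper attributes to \cite{Fat21}; your argument via pure commutation misses the integration-by-parts step that trades one derivative of $f$ for the mildly singular kernel $(1-e^{-2t})^{-1/2}$.
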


We are now ready to implement Stein's method to prove our main result. In particular, for a given test function $f: \mathbb{R}^n \to \mathbb{R}$ with uniformly bounded second derivatives, we'll bootstrap the solution $\varphi_f$ to the Stein equation \eqref{eq:SteinPDE} to construct a solution $V: \mathbb{R}^n \to M_{n\times n}(\mathbb{R})$ to the integrated second-order Stein equation
\begin{align*}
\int f d\mu  - \int f d\gamma &= \int \left( (x x^T-\operatorname{Id}) \cdot V -  (x \nabla^T) \cdot V   \right) d\mu.
\end{align*}
The main stability result will then follow from the approximate Stein identity of Lemma \ref{lem:ApproxStein}, regularity estimates on $V$, and definition of the Zolotarev distance.

\begin{proof}[Proof of Theorem \ref{thm:MainResult}] Fix any $f: \mathbb{R}^n \to \mathbb{R}$ satisfying  $\sup_x \|\nabla^2 f(x)\|_2 \leq 1$.  
Using the classical identity $\nabla (P_t f) = e^{-t} P_t (\nabla f)$ for $P_t$, we have
$$
\nabla \varphi_f (x) = \int_0^{\infty} \nabla^2 P_t f(x) dt = \int_0^{\infty}  e^{-2t} P_t( \nabla^2 f)(x) dt. 
$$
By the triangle  and Jensen inequalities, \eqref{eq:regularityBds}, and boundedness of $\nabla^2 f$,    \eqref{eq:BarbourSoln} implies
\begin{align}
\sup_{x}\|\nabla \varphi_f(x)\|_2\leq \frac{1}{2} \sup_x \|\nabla^2 \varphi_f(x)\|_2 \leq \frac{1}{2}  \sup_x \|\nabla^2 f(x)\|_2 \leq \frac{1}{2}. \label{eq:gradientBounds}
\end{align}
In particular, $\varphi_f$ is $1/2$-Lipschitz and $\nabla \varphi_f$ is 1-Lipschitz.  Now, define $a\in \mathbb{R}^n$ and  $Q \in M_{n\times n}(\mathbb{R})$ by
$$
a :=   \varphi_f(0), ~~~[Q]_{ij} := (1+\delta_{ij})[\nabla \varphi_f(0)]_{ij},
$$
and put 
$$
g(x):= f(x) - a^T x - \frac{1}{2}x^T Q x. 
$$
Using the fact that Hermite polynomials are eigenfunctions of the Ornstein--Uhlenbeck semigroup, we can check that the solution $\varphi_g := \nabla \int_{0}^{\infty} P_t g dt$ to the Poisson equation 
\begin{align}
g - \int g d\gamma = x \cdot \varphi_g - \Tr(\nabla \varphi_g), \label{eq:gPoissEqn0}
\end{align}
is equal to 
$$
\varphi_g(x) = \varphi_f(x) -\varphi_f(0)- \nabla \varphi_f(0) x, ~~~x\in \mathbb{R}^n.
$$
In particular, $\varphi_g$  satisfies
\begin{align}
\varphi_g(0) = 0,~~\nabla \varphi_g(0) = 0,  ~~\mbox{and}~~\nabla^2 \varphi_g(x) = \nabla^2 \varphi_f(x). \label{eq:varphig_data}
\end{align}
We now establish some basic regularity properties of $\varphi_g$.  
Combining \eqref{eq:varphig_data} with the Lipschitz estimates established for $\varphi_f$, we have 
$$
|\varphi_g(x)|\leq \frac{1}{2}|x|, ~~\mbox{and}~~\| \nabla \varphi_g(x) \|_2 \leq |x|.
$$
Additionally, by a  Taylor expansion around $x=0$, the properties \eqref{eq:varphig_data} together with boundedness of second-derivatives of $\varphi_f$ imply the  quadratic growth estimate
$$
|\varphi_g(x)| \leq  \frac{1}{2}\|\nabla^2 \varphi_f(0) \|_2 |x|^2 \leq \frac{1}{2}|x|^2.
$$

Next, we define a matrix-valued function $V: \mathbb{R}^n \to M_{n\times n}(\mathbb{R})$ by 
$$
V(x) := \begin{cases}
\frac{1}{|x|^2} x \varphi_g^T(x) & \mbox{if $x\neq 0$}\\
0 &  \mbox{if $x=0$}.
\end{cases}
$$
By definition of $V$ and the fact that $\varphi_g(0)=0$, we have
$$
V^T(x) x = \varphi_g(x), ~~x\in \mathbb{R}^n.
$$
Now, we check the regularity of $V$.   Since $\varphi_g$ inherits continuity properties from $\varphi_f$, it follows that $V$ is continuous on $\mathbb{R}^n\setminus\{0\}$.  It is also continuous at $x=0$, which follows since $|\varphi_g(x)|\leq \frac{1}{2}|x|^2 $, and therefore  $\lim_{x\to 0}V(x) = 0 = V(0)$.

Evidently, $V$ is differentiable on $\mathbb{R}^n\setminus\{0\}$. For $x\neq 0$, we compute
\begin{align*}
\nabla( [V(x)]_{ij}) = \nabla  \frac{x_i [\varphi_g(x)]_j}{|x|^2} = -2x  \frac{x_i [\varphi_g(x)]_j}{|x|^4} + \frac{1}{|x|^2} (e_i [\varphi_g(x)]_j  + x_i \nabla  [\varphi_g(x)]_j ).
\end{align*}
Using the regularity estimates on $\varphi_g$, we obtain  
$$
|\nabla( [V(x)]_{ij})| \leq |x_i| + \frac{1}{2} + \frac{|x_i|}{|x|} \leq |x_i| + \frac{3}{2}.
$$
Thus, using the moment assumption, we have
\begin{align}
\|\nabla( [V]_{ij})\|_{L^2(\mu)} &= \left( \int |\nabla( [V(x)]_{ij})|^2 d\mu(x) \right)^{1/2} \notag\\
&\leq 
\left( \int(|x_i| + 3/2)^2 d\mu(x) \right)^{1/2}\leq 5/2. \label{eq:VgradientBound}
\end{align}

Finally, we put everything together to obtain
\begin{align}
\int f d\mu  - \int f d\gamma &= \int gd\mu - \int g d\gamma \label{eq:mugammaMoments}\\
&=  \int \left( x \cdot \varphi_g - \Tr(\nabla \varphi_g) \right) d\mu  \label{eq:gPoissEqn}\\
&=  \int \left( (x x^T-\operatorname{Id}) \cdot V -  (x \nabla^T) \cdot V   \right) d\mu  \label{eq:applyDefV}\\
&\leq K \sqrt{   C_{PK}(C_{PK}-1)} \sum_{i,j=1}^n \|\nabla [V]_{ij} \|_{L^2(\mu)}  \label{eq:applyApproxStein}\\
&< 20 n^2 \sqrt{   C_{PK}(C_{PK}-1)}. \label{eq:VgradientBound2}
\end{align}
In the above, \eqref{eq:mugammaMoments} follows by definition of $g$ and the moment assumption; 
\eqref{eq:gPoissEqn} is \eqref{eq:gPoissEqn0}, integrated with respect to $\mu$; \eqref{eq:applyDefV} follows since $V^T(x)x = \varphi_g(x)$; \eqref{eq:applyApproxStein} follows from the approximate Stein identity of Lemma \ref{lem:ApproxStein} applied to (smooth approximations of) $V$; \eqref{eq:VgradientBound2} is the bound \eqref{eq:VgradientBound}.  Taking supremum over $f$ proves the theorem.
\end{proof}

\appendix

\section{Proof of Proposition \ref{prop_conj_triviale}}

We first recall Caffarelli's contraction theorem \cite{Caf01}: if a probability measure $d\mu = e^{-\phi}dx$ satisfies $\nabla^2 \phi \geq \operatorname{Id}$, then there exists a transport map $T$ from $\gamma$ onto $\mu$ that is $1$-Lipschitz. This map $T$ is the Brenier map from optimal transport theory. 

 For $T$ as above, we have $|T(x) - T(y)| \leq |x-y|$ for all $x, y \in \R^n$. Moreover, 
\begin{align*}
2n = \int{|x - y|^2 d\mu(x)d\mu(y)} &= \int{|T(x) - T(y)|^2 d\gamma(x)d\gamma(y)} \\
&\leq \int{|x-y|^2d\gamma(x)d\gamma(y)}  = 2n.
\end{align*}
Hence there is equality throughout, and   $|T(x) - T(y)| = |x-y|$, ~ $\gamma^{\otimes 2}$-a.s. Since $T$ is continuous, the equality holds everywhere. In the case where $\mu$ has full support, $T$ is surjective, and therefore $T$ is a surjective isometry. By the Mazur--Ulam theorem, we conclude that $T$ is affine. Since $\mu$ is centered and isotropic by assumption,  it must be standard Gaussian. 

If $\mu$ does not have a full support, we can take a convolution with a standard Gaussian, rescaled so that the new measure $\nu$ is still isotropic. Since $\nu$ has full support and  $1$-uniform log-concavity is preserved by this operation, we can apply the previous case to deduce that $\nu$ is Gaussian. Since a convolution of two measures is Gaussian iff both are Gaussian, it follows that $\mu$ is also Gaussian.

\end{document}